\definecolor {processblue}{cmyk}{1,0.5,0,0}
\newcommand{\cp}{\ \stackrel{P}{\longrightarrow} \ }
\newcommand{\FF}{\mbox{${\mathcal F}$}}
\newcommand{\Pbold}{\mbox{${\mathbb P}$}}
\newcommand{\bE}{{\mathbf E}}
\newcommand{\bone}{\mathbf{1}}
\newtheorem{theorem}{Theorem}
\newtheorem{proposition}{Proposition}
\theoremstyle{remark}
\newtheorem{remark}{Remark}
\theoremstyle{definition}
\newtheorem{definition}{Definition}
\begin{document}
\title[Urn Schemes with Negative but Linear Reinforcements]{Generalized P\'olya Urn Schemes with Negative but Linear Reinforcements}
\author{Antar Bandyopadhyay}
\address[Antar Bandyopadhyay]{Theoretical Statistics and Mathematics Unit \\
         Indian Statistical Institute, Delhi Centre \\ 
         7 S. J. S. Sansanwal Marg \\
         New Delhi 110016 \\
         INDIA}
\address{Theoretical Statistics and Mathematics Unit, 
         Indian Statistical Institute, Kolkata;
         203 B. T. Road, Kolkata 700108, INDIA}
\email{\href{mailto:antar@isid.ac.in}{antar@isid.ac.in}}       
\author{Gursharn Kaur}  
\address[Gursharn Kaur]{Theoretical Statistics and Mathematics Unit \\
         Indian Statistical Institute, Delhi Centre \\ 
         7 S. J. S. Sansanwal Marg \\
         New Delhi 110016 \\
         INDIA}
\email{\href{mailto:gursharn.kaur24@gamil.com}{gursharn.kaur24@gmail.com}}

\date{\today}

\selectfont

\begin{abstract}
In this paper, we consider a new type of urn scheme, where the selection 
probabilities are proportional to a weight function, which is linear but decreasing
in the proportion of existing colours. 
We refer to it as the \emph{negatively reinforced} urn scheme. 
We establish almost sure limit of the random configuration for any \emph{balanced}
replacement matrix $R$. In particular, we show that the limiting configuration is 
uniform on the set of colours, if and only if, $R$ is a \emph{doubly stochastic} matrix.
We further establish almost sure limit of the vector of colour counts and 
prove central limit theorems for the random configuration, as well as, for the colour 
counts.	
\end{abstract}

\keywords{Central limit theorem, negative reinforcement, strong law of large numbers, urn models.} 

\subjclass[2010]{Primary: 60F05, 60F15; Secondary: 60G57}

\maketitle

\section{Introduction}

\subsection{Background and Motivation}
\label{SubSec:Background}
Various kinds of \emph{random reinforcement models} have been of much interest in recent years
\cite{Da90, Vl03, Svante2, BaiHu05, FlDuPu06, VlTa07, Pe07, CoVl09, LaPa13, CoCoLi13, ChHsYa14}. 
\emph{Urn schemes}, which were first studied by P\'{o}lya \cite{Polya30}, are perhaps the simplest 
reinforcement models. They have many applications and generalizations 
\cite {Fri49, Free65, AthKar68, BagPal85, Pe90, Gouet, Svante1, Svante2, BaiHu05, maulik1, maulik2, maulik3,
CrGeNiVol11, LaPa13, ChHsYa14, BaTh14a, BaTh14b,  BaTh14c}.
In general, reinforcement models typically adhere to the structure of ``\emph{rich get richer}'', which can also be termed as 
\emph{positive reinforcement}. 
However, there have been some studies on \emph{negative reinforcements} models in the context of percolation, such as the
\emph{forest fire}-type models from the point-of-view of \emph{self-destruction} 
\cite{vanBr04, RaToth09, CraFreeToth15, AhlSidTyk14, AhlDumKozSid15} and 
\emph{frozen percolation}-type models from the point-of-view of \emph{stagnation} \cite{Al00, Ba06, vanKissNo12, vandeLimaNo12, vanNo17}. 
For urn schemes, a type of ``negative reinforcement'' have been studied when balls can be thrown away from the urn, as well as,
added \cite{EhEh90, WiMaIl98, King99, KingVol03,CrGeNiVol11}. 
In such models, it is usually assumed that the model is \emph{tenable}, that is,
regardless  of  the  stochastic  path  taken  by  the
process, it is never required to remove a ball of a colour not currently present in the
urn. Perhaps the most famous of such scheme is the \emph{Ehrenfest urn} \cite{EhEh90, Mah09}, which models the diffusion of a gas between
two chambers of a box. There are some models without tenability, such as the \emph{OK Corral Model} \cite{WiMaIl98, King99, KingVol03} or 
\emph{Simple Harmonic Urn} \cite{CrGeNiVol11} in two colors. Typically these are used for modeling \emph{destructive competition}.

In recent days, there has been some work on \emph{negative reinforcements},
random graphs  \cite{SevRik2006, SevRik2008, BaSe14} from a different point-of-view, where 
attachment probabilities of a new vertex are decreasing functions of the degree of the existing vertices.
Such models have also been referred as ``\emph{de-preferential attachment}'' \cite{BaSe14} as opposed to usual  
``preferential'' attachment models \cite{AlBara99}.
Motivated by this later set of works, in this paper, we present a specific model of \emph{negatively reinforced urn scheme}, where
the selection probabilities are linear but decreasing function of the proportion of colours.
Negatively reinforced urn schemes are natural models for modeling problems with resource constrains. 
In particular, multi-server queuing systems with capacity constrains \cite{LucMcDiar2005, LucMcDiar2006}. 
For such cases, it is desirable that at the steady state limit, all agents are having equal loads. 
In this work, we show that for a negative but linearly reinforced urn scheme 
such a limit is obtained under fairly general conditions on the replacement mechanism. 
%
%

\subsection{Model Description}
\label{SubSec:Model}
In this work, we will only consider \emph{balanced} urn schemes with $k$-colours, index by $S := \left\{0,1,\ldots, k-1\,\right\}$.  
More precisely, if $R := \left(\left(R_{i,j}\right)\right)_{0 \leq i,j \leq k-1}$ denotes the \emph{replacement matrix}, 
that is, 
$R_{i,j} \geq 0$ is the \emph{number of balls of colour $j$ to be placed in the the urn when the colour of the selected ball 
is $i$}, then
for a balanced urn, all row sums of $R$ are constant. In this case, 
dividing all entries by the common row total, 
we may assume $R$ is a \emph{stochastic matrix}. 
We will also assume that the starting configuration $U_0:=\left(U_{0,j}\right)_{0 \leq j \leq k-1}$ 
is a probability distribution on the set of colours $S$. As we will see from the proofs of our main results, 
this apparent loss of generality can easily be removed. 

Denote by $U_n := \left(U_{n,j}\right)_{0 \leq j \leq k-1} \in [0, \infty)^k$
the random configuration of the urn at time $n$. Also let $\FF_n := \sigma\left(U_{0}, U_{1}, \cdots, U_{n}\right)$ be the
natural filtration. We define a random variable $Z_n$ by
\begin{equation}
\Pbold\left( Z_n = j \,\Big\vert\, \FF_n \right) 
\propto 
w_\theta\left(\frac{U_{n,j}}{n+1}\right), \,\,\, 0 \leq j \leq k-1.
\label{Equ:Def-Z_n}
\end{equation}
where $w_\theta :[0,1] \to \mathbb{R}_+$ is given by
\begin{equation}
\label{weight-function}
w_\theta(x) =\theta -x.
\end{equation}
$\theta \geq 1$ will be considered as a parameter for the model.  
Note that, $Z_n$ represents the colour chosen at the $\left(n+1\right)$-th draw. 
Starting with $U_0$ we define $\left(U_n\right)_{n \geq 0}$ recursively as follows:
\begin{equation}
U_{n+1} = U_n + \chi_{n+1} R.
\label{Equ:Basic-Recursion} 
\end{equation}
where $\chi_{n+1} := \left(\bone\left(Z_n = j \right)\right)_{0 \leq j \leq k-1}$. 

We call the process $\left(U_n\right)_{n \geq 0}$, a \emph{negative but linearly reinforced urn scheme} with
initial configuration $U_0$ and replacement matrix $R$. 
In this work, we study the asymptotic properties of the following two processes: \\

\noindent
{\bf Random configuration of the urn:}
Observe that for all $n \geq 0$,
\begin{equation}
\sum_{j=0}^{k-1} U_{n,j} = n + 1.
\label{Equ:Sum-of-weights}
\end{equation}
This holds because $R$ is a stochastic matrix and $U_0$ is a probability vector. 
Thus the \emph{random configuration of the urn}, namely, $\displaystyle{\frac{U_n}{n+1}}$
is a probability mass function. Further,  
\begin{equation}
\Pbold\left(Z_n = j \,\Big\vert\, \FF_n \right) 
= \frac{\theta}{k \theta - 1} - \frac{1}{k \theta -1} \frac{U_{n,j}}{n+1}
, \,\,\, 0 \leq j \leq k-1. 
\label{prob}
\end{equation}
Thus, $\displaystyle{\frac{U_n A}{n+1}}$ is the conditional distribution of the $(n+1)$-th
selected colour, namely $Z_n$, given $U_0, U_1, \ldots, U_n$, where
\begin{equation}
A_{k \times k} = \frac{\theta}{k \theta -1} J_k - \frac{1}{k \theta -1} I_k,
\label{Equ:Def-A}
\end{equation}
and $J_{k} := \bone^{T}\bone$ is the $k \times k$ matrix with all entries equal to $1$ and $I_{k}$ is the 
$k \times k$-identity matrix.\\

\noindent
{\bf Color count statistics:}
Let $N_n := \left(N_{n,0}, \hdots , N_{n,k-1}\right)$ be the vector of length $k$,
whose $j$-th element is the number of times colour $j$ was selected in the first $n$ trials, that is
\begin{equation}
\label{Def:Nn}
N_{n,j} = \sum_{m=0}^{n-1} \bone\left(Z_m = j\right), \,\,\, 0 \leq j \leq k-1.
\end{equation}
It is easy to note that from  ~~\eqref{Equ:Basic-Recursion} it follows
\begin{equation}
\label{Un-Nn}
U_{n+1} = U_0 + N_{n+1} R. 
\end{equation}

\subsection{Outline}
\label{SubSec:Outline}
In Section \ref{Sec:Main-Results} we present the main results of the paper and the proofs are given in 
Section \ref{Sec:Proof-of-Necessary-Sufficient-Condition-on-hatR} and
Section \ref{Sec:Proofs}. 

\section{The Main Results}
\label{Sec:Main-Results}

We define a new $k \times k$ stochastic matrix, namely
\begin{equation}
\label{Def:R-hat}
\hat{R} := RA = \frac{1}{k \theta -1} \left(\theta J_k - R\right),
\end{equation}
where $A$ is as defined in ~~\eqref{Equ:Def-A}. As we state in the sequel, the asymptotic properties of 
$\left(U_n\right)_{n \geq 0}$ and $\left(N_n\right)_{n \geq 0}$ depends on whether the stochastic matrix
$\hat{R}$, is \emph{irreducible} or \emph{reducible}. We first state a necessary and sufficient condition for that. 

\subsection{A Necessary and Sufficient Condition for $\hat{R}$ to be Irreducible}
\label{SubSec:Star-Condition}
We start with the following definitions, which are needed for stating our main results.
\begin{definition}
 A directed graph $\mathcal{G} =(\mathcal{V}, \vec{\mathcal{E}} )$ is called the graph associated with a $k\times k$ 
 stochastic $R = ((R_{i,j}))_{0\leq i,j\leq k-1}$, if 
 \[\mathcal{V} =  \{ 0, 1,\hdots, k-1\}\;\;  \text{and}\;\; \vec{\mathcal{E}} =\{(\overrightarrow{i,j})| R_{i,j}>0; i,j\in \mathcal{V}\}.\]
\end{definition}

\begin{definition}
A stochastic matrix $R$ is called a \emph{star}, if there exists a $j \in \{0,1,\cdots, k-1\}$,
such that,
\[R_{i,j} = 1 \;\; \text{ for all  } i\neq j,\] 
and in that case, we say $j$ is the central vertex.
\end{definition}

By definition, for the graph associated with a star replacement matrix, 
there is a central vertex such that each vertex other than the central vertex has only one outgoing edge and 
that is towards the central vertex. We note that in the definition of a star we allow the central vertex to have a self loop.
%

As we will see in the sequel, the asymptotic properties will
depend on the irreducibility of the  (new) stochastic matrix $\hat{R}$, as defined in 
~\eqref{Def:R-hat}. Following lemma provides a necessary and sufficient condition for $\hat{R}$ to be irreducible.

\begin{proposition}\label{Main-Proposition}
Let $R$ be a $k \times k$ stochastic matrix with $k \geq 2$, then $\hat{R}$ is irreducible, 
if and only if either $\theta >1$ or $\theta =1 $ but $R$ is not a star.
\end{proposition}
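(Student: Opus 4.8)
The plan is to reduce irreducibility of the nonnegative matrix $\hat R$ to strong connectivity of its associated directed graph, and to exploit the explicit formula
\[
\hat R_{i,j} = \frac{\theta - R_{i,j}}{k\theta - 1},
\]
which is immediate from $J_k$ having all entries equal to $1$. Since $R$ is stochastic we have $R_{i,j}\in[0,1]$, and since $\theta\geq 1$ every entry of $\hat R$ is nonnegative. The crucial observation is that $\hat R_{i,j}=0$ if and only if $\theta=1$ and $R_{i,j}=1$, and, $R$ being stochastic, $R_{i,j}=1$ forces the entire $i$-th row of $R$ to be concentrated at coordinate $j$. Writing $i\to j$ for a directed edge in the graph of $\hat R$, this says: $i\to j$ is present exactly when $\theta>1$, or when $\theta=1$ and $R_{i,j}<1$. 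It is also convenient to note that the claimed equivalence is the same as: $\hat R$ is \emph{reducible} if and only if $\theta=1$ \emph{and} $R$ is a star.

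I would first dispatch the two easy cases. If $\theta>1$, then every entry of $\hat R$ is strictly positive, so its graph is complete (with all loops) and hence strongly connected, so $\hat R$ is irreducible. If instead $\theta=1$ and $R$ is a star with central vertex $c$, then $R_{i,c}=1$ for all $i\neq c$, hence $\hat R_{i,c}=0$ for all $i\neq c$; that is, $c$ has no incoming edge from any other vertex. Since $k\geq 2$, the vertex $c$ cannot be reached from the remaining vertices, the graph fails to be strongly connected, and $\hat R$ is reducible. The star case just treated gives the ``converse'' half of the rephrased equivalence; combined with the final case below, the $\theta>1$ case gives the other half, namely that reducibility forces $\theta=1$ and $R$ a star.

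It remains to treat $\theta=1$ with $R$ not a star, where I would prove strong connectivity directly by exhibiting, for any two distinct vertices $a,b$, a directed path of length at most $2$ in the graph of $\hat R$. If $R_{a,b}<1$, the edge $a\to b$ suffices. Otherwise $R_{a,b}=1$; since $R$ is not a star with central vertex $b$, there exists $\ell\neq b$ with $R_{\ell,b}<1$, and necessarily $\ell\neq a$ because $R_{a,b}=1$. Moreover $R_{a,\ell}=0<1$, since row $a$ of $R$ already puts all of its mass at $b\neq\ell$. Hence both $a\to\ell$ and $\ell\to b$ are edges of the graph, giving the path $a\to\ell\to b$. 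Therefore the graph is strongly connected and $\hat R$ is irreducible, completing the proof.

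The argument is elementary; the points that need care are the logical bookkeeping — using $\theta\geq 1$, so that ``not $\theta>1$'' is the same as ``$\theta=1$'', which lets one collapse the two-part condition — and the small-$k$ boundary, e.g.\ $k=2$, where the two-step construction in the last step would degenerate. In that regime, however, ``$R$ is not a star'' already forces both off-diagonal entries of $\hat R$ to be positive, so the subcase $R_{a,b}=1$ simply does not arise. The only mildly delicate step is ``$R$ not a star with central vertex $b$ $\Rightarrow$ $\exists\,\ell\neq b$ with $R_{\ell,b}<1$'', which is precisely the negation of the definition of a star with a prescribed central vertex.
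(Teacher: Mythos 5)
Your proof is correct and follows essentially the same route as the paper's: you identify that $\hat R_{i,j}=0$ precisely when $\theta=1$ and $R_{i,j}=1$, dispose of $\theta>1$ by positivity of all entries, show reducibility for a star at $\theta=1$ by observing the central vertex has no incoming edges, and prove strong connectivity when $R$ is not a star by exhibiting a path of length at most two from $a$ to $b$ via a vertex $\ell$ with $R_{\ell,b}<1$. The paper phrases the intermediate vertex as one with an outgoing $R$-edge to some $m\neq v$, but that is the same condition as $R_{\ell,b}<1$ for a stochastic row; your explicit checks that $\ell\neq a$ and $R_{a,\ell}=0$ (so that both hops exist), and your remark that the two-step construction is vacuous when $k=2$ because $R_{a,b}=1$ cannot occur for a non-star, are the same observations the paper makes, just stated a little more carefully.
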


\subsection{Asymptotics of the Random Configuration of the Urn}
\label{SubSec:Random-Config-Results}

\subsubsection{Case when $\hat{R}$ is Irreducible.}
Our first result is the almost sure asymptotic of the colour proportions. 
\label{SubSubSec:Random-Config-Results-Irreducible}
\begin{theorem} 
\label{Thm-Un-Strong-convergence}
Let $\hat{R}$ be irreducible. Then, for every starting configuration $U_0$,
\begin{equation}
\frac{U_{n,j}}{n+1} \longrightarrow \mu_j,\;\; a.s.\;\;\;\;\forall \;0\leq j\leq k-1,
\label{Equ:Limit-mu}
\end{equation}
where  $\mu = \left(\mu_0,\mu_1,\cdots, \mu_{k-1}\right)$ is the unique solution of the following matrix equation
\begin{equation}
\left(\theta \bone - \mu\right) R = \left(k \theta -1\right) \mu.
\label{Equ:Def-mu}
\end{equation}
\end{theorem}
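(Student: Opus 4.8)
The plan is to read the recursion \eqref{Equ:Basic-Recursion} as a Robbins--Monro stochastic approximation for the colour proportions and to run the ODE method, with the mean field being the Markov-type operator $AR-I_k$. Set $v_n:=U_n/(n+1)$, which by \eqref{Equ:Sum-of-weights} lies in the simplex $\Delta:=\{v\in[0,\infty)^k : v\bone^T=1\}$ for every $n$. Dividing \eqref{Equ:Basic-Recursion} by $n+2$ and rearranging,
\[
v_{n+1}\;=\;v_n+\frac{1}{n+2}\bigl(\chi_{n+1}R-v_n\bigr).
\]
By \eqref{prob}--\eqref{Equ:Def-A} (this is exactly the identity displayed just before \eqref{Equ:Def-A}) we have $\Ebold[\chi_{n+1}\mid\FF_n]=v_nA$, so
\[
v_{n+1}\;=\;v_n+\frac{1}{n+2}\bigl(v_n(AR-I_k)+\xi_{n+1}\bigr),\qquad \xi_{n+1}:=\chi_{n+1}R-v_nAR,
\]
where $(\xi_{n+1})$ is a bounded $(\FF_n)$-martingale difference sequence (since $\chi_{n+1}$ is a coordinate vector, $v_n\in\Delta$ and $R$ is fixed). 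Thus we have a stochastic approximation with gains $1/(n+2)$ satisfying $\sum_n\gamma_n=\infty$, $\sum_n\gamma_n^2<\infty$, bounded martingale noise, iterates confined to the compact convex set $\Delta$, and linear (hence globally Lipschitz) mean field $h(v)=v(AR-I_k)$.

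Next I would analyse the associated ODE $\dot v=v(AR-I_k)$. Since $\theta\ge1$, the matrix $A$ in \eqref{Equ:Def-A} is entrywise nonnegative with unit row sums, so $AR$ is stochastic and $Q:=AR-I_k$ is a continuous-time Markov generator on $S$; the flow $v(t)=v(0)e^{tQ}$ is conservative and keeps $\Delta$ invariant. The decisive input is Proposition \ref{Main-Proposition}: under its hypothesis $\hat R=RA$ (see \eqref{Def:R-hat}) is irreducible, hence has a unique stationary distribution; since $A$ is invertible with $A\bone^T=\bone^T$ and $A,R$ are nonnegative, the map $\pi\mapsto\pi R$ is a bijection from the stationary distributions of $\hat R$ onto those of $AR$ (with inverse $\mu\mapsto\mu A$). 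Hence $AR$ also has a unique stationary distribution, which I call $\mu$; it is the only zero of $h$ in $\Delta$, and it is globally asymptotically stable for the ODE on $\Delta$, because $t\mapsto\|(v(0)-\mu)e^{tQ}\|_1=\|v(0)e^{tQ}-\mu\|_1$ is non-increasing and tends to $0$ — precisely convergence to equilibrium for a finite chain with a single recurrent class. The standard convergence theorem of stochastic approximation (ODE method: the limit set of $(v_n)$ is internally chain transitive for the flow, and $\{\mu\}$ is then the only such set) gives $v_n\to\mu$ almost surely for every deterministic $v_0$, hence for every $U_0$; a non-normalised $U_0$ only replaces $n+1$ by $n+\sum_jU_{0,j}$ everywhere and affects nothing asymptotically.

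It remains to identify $\mu$ with the object in \eqref{Equ:Def-mu} and to see that \eqref{Equ:Def-mu} has a unique solution. From \eqref{Equ:Def-A} and $\mu\bone^T=1$ one computes $\mu A=\tfrac{\theta}{k\theta-1}\bone-\tfrac{1}{k\theta-1}\mu$; substituting into $\mu AR=\mu$ yields $(\theta\bone-\mu)R=(k\theta-1)\mu$, i.e.\ \eqref{Equ:Def-mu}. Conversely, any solution $\mu'$ of \eqref{Equ:Def-mu} has $\sum_j\mu'_j=1$ (sum both sides, using that $R$ is stochastic), so the computation runs backwards to give $\mu'AR=\mu'$, equivalently $(\mu'A)\hat R=\mu'A$ with $(\mu'A)\bone^T=1$; irreducibility of $\hat R$ forces $\mu'A$ to be its unique stationary distribution, whence $\mu'=\mu$.

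I expect the only genuinely substantive step to be the global asymptotic stability of $\mu$ for the mean-field ODE — everything else is bookkeeping — and that is exactly where Proposition \ref{Main-Proposition} enters, through uniqueness of the stationary distribution of $AR$; one also has to verify, routinely, that the hypotheses of the quoted SA theorem hold. If one prefers to avoid a black-box theorem, the same conclusion is self-contained: writing $y_n:=v_n-\mu$, which lies in the $Q$-invariant hyperplane $H:=\{y:y\bone^T=0\}$, on which every eigenvalue of $Q$ has strictly negative real part (again by the uniqueness just used), unfold the recursion to $y_n=y_0\,\Pi_{0,n}+\sum_{m}\frac{1}{m+2}\,\xi_{m+1}\,\Pi_{m+1,n}$ with $\Pi_{a,b}:=\prod_{l=a}^{b-1}(I_k+\tfrac{Q}{l+2})$; in a norm adapted to $Q|_H$ one gets $\|\Pi_{a,b}|_H\|\le C\,(a/b)^{\delta}$ for some $\delta>0$, which annihilates the first term and makes the martingale series converge almost surely and in $L^2$, so $y_n\to0$.
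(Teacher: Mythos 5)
Your argument is correct, and it takes a genuinely different route from the paper's. The paper couples the process to a \emph{classical} (positively reinforced) urn by setting $\hat{U}_n:=U_nA$, observes that $\hat{U}_n$ is a balanced urn with irreducible stochastic replacement matrix $\hat{R}=RA$, and then simply cites Theorem~2.2 of Bai--Hu (2005) to get $\hat{U}_n/(n+1)\to\nu$ with $\nu\hat R=\nu$, whence $U_n/(n+1)\to\mu=\nu A^{-1}$ after inverting $A$. You instead run the stochastic-approximation/ODE method directly on $v_n=U_n/(n+1)$, with mean field $v\mapsto v(AR-I_k)$, and reduce everything to the uniqueness of the stationary distribution of $AR$, which you transport from the irreducibility of $\hat R=RA$ via the bijection $\pi\mapsto\pi R$, $\mu\mapsto\mu A$ (valid since $A\ge0$ because $\theta\ge1$, and $R\bone^T=A\bone^T=\bone^T$). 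The two approaches are of course closely related --- Bai--Hu's own proof is ODE-method flavoured, and your change of variables $v\mapsto vA$ is exactly the paper's coupling --- but yours is self-contained and exposes the mechanism, whereas the paper's is shorter at the cost of being a black-box citation. Your identification of $\mu$ with the solution of \eqref{Equ:Def-mu}, and the uniqueness argument (sum both sides against $\bone^T$ to get $\sum_j\mu'_j=1$, then pass to $\mu'A$ and invoke simplicity of the Perron eigenvalue of $\hat R$), matches what the paper records in its remark. Two small points worth tightening if you write this up: (i) the claim that every non-unit eigenvalue of $Q|_H$ has strictly negative real part uses \emph{both} the simplicity of the eigenvalue $1$ (from irreducibility of $\hat R$, whose spectrum equals that of $AR$) \emph{and} the spectral radius bound $|\lambda|\le1$ for the stochastic matrix $AR$, not uniqueness alone; (ii) the passage from ``non-increasing and tending to $0$'' of $\|v(0)e^{tQ}-\mu\|_1$ to the ODE-method hypothesis (only chain-recurrent set is $\{\mu\}$) should be flagged as the Lyapunov step, though it is indeed routine here, and in any case your self-contained tail paragraph with the product bound $\|\Pi_{a,b}|_H\|\le C(a/b)^{\delta}$ sidesteps the issue entirely.
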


\begin{remark}
Notice that if we define $\nu = \mu A$, then from the equations ~\eqref{Equ:Def-A} and ~\eqref{Equ:Def-mu}, it follows that 
$\nu$ is the unique solution of the matrix equation 
$\nu \hat{R} = \nu$. Further, from equation ~\eqref{Equ:Def-mu} we have $\mu = \nu R$.
\end{remark}

\begin{remark} 
 Since $\frac{U_{n,j}}{n+1}$ is a bounded random variable, thus we get
\begin{equation}
\frac{\bE\left[U_{n,j}\right]}{n+1} \longrightarrow \mu_j, \;\;a.s., \;\;\; \forall \;0\leq j\leq k-1,
 \end{equation}
 where $\mu$ satisfies equation ~\eqref{Equ:Def-mu}.
\end{remark}

\begin{remark}
It is worth to note here that, the stochastic matrices $R$ and $\hat{R}$ both have uniform distribution 
as their unique stationary distribution, if and only if, $R$ is doubly stochastic, that is when 
$\bone R = \bone$.

\end{remark}

Our next result is a \emph{central limit theorem} for the colour proportions. 
\begin{theorem}
\label{Thm-Un-CLT}
Suppose $\hat{R}$ is irreducible then there exists a $k\times k$ \emph{variance-co-variance} matrix $\Sigma \equiv \Sigma\left(\theta, k\right)$, 
such that,
\begin{equation}
 \frac{ U_n -n\mu}{\sigma_n}  \implies {\mathcal N}_k(0,\Sigma),
\end{equation}
where for $k \geq 3$, 
\begin{equation}\label{Equ:Def-sigma-n-k}
 \sigma_n  =\begin{cases}
            \sqrt{n \log n} & \text{if}  \;\; k = 3, \theta = 1 \text{\ and one of the eigenvalue of\ } R \text{ is} -1,\\
                            & \\
            \sqrt{n} & \text{otherwise}.
	  \end{cases}
\end{equation}
and for $k=2$ and $\theta \in \left[1, \frac{3}{2}\right], $
\begin{equation}\label{Equ:Def-sigma-n-2}
 \sigma_n  =\begin{cases}
            \sqrt{n \log n} & \text{if}  \;\; \text{the eigenvalues of\ } R \text{\ are\ } 1 
                                              \text{\ and\ } \lambda = \frac{1 - 2\theta}{2}; \\
                            & \\
            \sqrt{n}        & \text{if}  \;\; \text{the eigenvalues of\ } R \text{\ are\ } 1 
                                              \text{\ and\ } \lambda < \frac{1 - 2\theta}{2}. \\
	  \end{cases}
\end{equation}
\end{theorem}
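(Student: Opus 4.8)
The plan is to recast the centered process as a linear stochastic-approximation recursion driven by a matrix conjugate to the stochastic matrix $\hat R$, and then apply the classical spectral dichotomy for such recursions. From \eqref{Equ:Basic-Recursion} and \eqref{prob} one has $\bE[\chi_{n+1}\mid\FF_n]=U_nA/(n+1)$, so, writing $B:=AR$ and $V_n:=U_n-(n+1)\mu$ and using $\mu B=\mu$ --- a reformulation of \eqref{Equ:Def-mu}, equivalently $\nu\hat R=\nu$ and $\mu=\nu R$ with $\nu:=\mu A$ --- one obtains
\[
V_{n+1}=V_n\Bigl(I+\tfrac{B}{n+1}\Bigr)+\Delta M_{n+1},\qquad \Delta M_{n+1}:=\bigl(\chi_{n+1}-\bE[\chi_{n+1}\mid\FF_n]\bigr)R,
\]
with $\Delta M_{n+1}$ a bounded $\FF_n$-martingale difference. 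By \eqref{Equ:Sum-of-weights} and $\sum_j\mu_j=1$, the row vector $V_n$ always lies in $H:=\{v:v\bone^{T}=0\}$, on which $J_k$ vanishes, so that $A$ acts there as $-\tfrac1{k\theta-1}I_k$ and $B$ acts as $-\tfrac1{k\theta-1}R|_H$; hence the spectrum governing the recursion is $\{-\lambda/(k\theta-1):\lambda\in\mathrm{spec}(R),\ \lambda\neq1\}$, i.e. the non-Perron spectrum of $\hat R$, and this is precisely where irreducibility of $\hat R$ enters, ruling out the eigenvalue $1$. The same argument applies verbatim to $N_n$ via \eqref{Un-Nn}, with driving matrix literally $\hat R$, and the statement for $U_n$ then follows from $U_n-n\mu=U_0+(N_n-n\nu)R$.

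Next, put $\Pi_n:=\prod_m\bigl(I+\tfrac{B}{m}\bigr)$ (started far enough out to be invertible; the factors, being polynomials in $B$, commute), so that $V_n$ decomposes as a term of order $\|\Pi_n|_H\|$ coming from the initial configuration plus the martingale transform $\sum_m\Delta M_m(\Pi_n\Pi_m^{-1})$. Standard estimates give $\Pi_n|_H\asymp n^{\rho^*}$ up to a power of $\log n$, where $\rho^*:=\max\{\mathrm{Re}\,\zeta:\zeta\in\mathrm{spec}(\hat R),\ \zeta\neq1\}$, and that power is trivial unless $B|_H$ has a non-semisimple eigenvalue of real part exactly $\rho^*$. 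With $W_n:=V_n\Pi_n^{-1}$ a martingale, Theorem~\ref{Thm-Un-Strong-convergence} shows the conditional covariances $\bE[\Delta M_m^{T}\Delta M_m\mid\FF_{m-1}]$ converge almost surely to $\Gamma:=R^{T}(\mathrm{diag}(\nu)-\nu^{T}\nu)R$, so $\langle W\rangle_n=\sum_m\Pi_m^{-T}\bE[\Delta M_m^{T}\Delta M_m\mid\FF_{m-1}]\Pi_m^{-1}$ behaves asymptotically like $\sum_m m^{-B^{T}}\Gamma\,m^{-B}$; conjugating this back by $\Pi_n$ controls the order of $\mathrm{Var}(V_n)$.

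The dichotomy then follows by translating $\mathrm{spec}(R)\setminus\{1\}$ through $\lambda\mapsto-\lambda/(k\theta-1)$ and using $\mathrm{Re}\,\lambda\ge-1$: this forces $\rho^*<\tfrac12$ except when $k\theta-1\le 2$, that is $(k,\theta)=(3,1)$ or $k=2$ with $\theta\le\tfrac32$, and even then $\rho^*=\tfrac12$ only if $R$ has the critical eigenvalue, namely $-1$ when $(k,\theta)=(3,1)$ and $\tfrac{1-2\theta}{2}$ when $k=2$; otherwise $\rho^*<\tfrac12$. When $\rho^*<\tfrac12$, the matrix $B|_H-\tfrac12 I$ has negative spectral abscissa, $\mathrm{Var}(V_n)\sim n\,\Sigma$ with $\Sigma$ the unique solution on $H$ of the Lyapunov equation $(B-\tfrac12 I)^{T}\Sigma+\Sigma(B-\tfrac12 I)+\Gamma=0$, the initial-configuration term is $O(n^{\rho^*})=o(\sqrt n)$ --- which also shows the restriction on $U_0$ in Section~\ref{SubSec:Model} costs nothing --- and a multidimensional martingale central limit theorem applied to $W_n$ (Lindeberg immediate from boundedness of $\Delta M$) gives $(U_n-n\mu)/\sqrt n\Rightarrow\mathcal N_k(0,\Sigma)$. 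When $\rho^*=\tfrac12$, the critical eigenvalue of $B|_H$ is simple: for $k\ge3$ it comes from the eigenvalue $-1$ of the stochastic matrix $R$, semisimple since the powers $R^m$ are bounded, and for $k=2$ the space $H$ is one-dimensional; so $\Pi_n|_H$ picks up exactly the factor $\sqrt{\log n}$ on the critical eigenspace, $\mathrm{Var}(V_n)\sim(n\log n)\Sigma$, and the same CLT gives $(U_n-n\mu)/\sqrt{n\log n}\Rightarrow\mathcal N_k(0,\Sigma)$.

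The main obstacle is the bookkeeping in the last two steps when $R$ (equivalently $B|_H$) is not diagonalizable, which can only occur for $k\ge3$: one must show that an extra power of $\log n$ in $\Pi_n|_H$ can arise only from an eigenvalue of real part exactly $\tfrac12$, which forces $k\theta=3$ (hence $(k,\theta)=(3,1)$) and pins that eigenvalue to $-1$, whose semisimplicity then follows from boundedness of the powers $R^m$, so the $\log$ power is exactly $\tfrac12$ and not higher; and, conversely, that sub-critical Jordan blocks, which do inject logarithms separately into $\Pi_m^{-1}$ and into $\Pi_n$, cancel out when $\langle W\rangle_n$ is conjugated back by $\Pi_n$, leaving the clean $\sqrt n$ rate. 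A secondary, routine point is that the conditional covariances converge only almost surely, so the martingale CLT must be applied in its stable-convergence form together with Theorem~\ref{Thm-Un-Strong-convergence}.
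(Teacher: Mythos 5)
Your plan is sound and, modulo the technical bookkeeping you yourself flag, would yield the theorem, but it is a substantially longer route than the one the paper takes. The paper's proof is a one-line reduction: since $\hat U_n := U_n A$ follows the classical (positively reinforced) urn recursion with replacement matrix $\hat R = RA$, it simply invokes Theorem 3.2 of Bai and Hu (2005) to get $(\hat U_n - n\nu)/\sigma_n \Rightarrow \mathcal N_k(0,\Sigma')$, with $\sigma_n$ dictated by whether the second-largest real part among the eigenvalues of $\hat R$ is $<1/2$ or $=1/2$, and then transfers back via $\Sigma = A^T\Sigma' A$. Your spectral bookkeeping is exactly the translation the paper does of Bai--Hu's condition: the non-unit eigenvalues of $\hat R$ are $\{-\lambda/(k\theta-1) : \lambda \in \mathrm{spec}(R),\ \lambda\neq 1\}$, which with $\theta\geq 1$ and $\Re(\lambda)\geq -1$ forces $\sqrt n$ scaling unless $k\theta\leq 3$, pinning the $\sqrt{n\log n}$ case to $(k,\theta)=(3,1)$ with eigenvalue $-1$, or $k=2$, $\theta\leq 3/2$ with eigenvalue $(1-2\theta)/2$. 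What you do beyond that --- set up the affine recursion $V_{n+1}=V_n(I+B/(n+1))+\Delta M_{n+1}$ on the hyperplane $H$ with $B=AR$, pass to the martingale $W_n=V_n\Pi_n^{-1}$, show the conditional covariances converge to $R^T(\mathrm{diag}(\nu)-\nu^T\nu)R$, solve the Lyapunov equation, and apply a stable martingale CLT --- is essentially a self-contained reproof of Bai--Hu's theorem in this special case rather than a citation of it. Your observation that $A$ acts as the scalar $-1/(k\theta-1)$ on $H$ does make the eigenvalue translation especially transparent, and the remark that the initial-condition term is $o(\sqrt n)$ cleanly disposes of the normalization of $U_0$; but going this route obliges you to carry out the Jordan-block and log-power estimates and to verify that sub-critical logarithms cancel under conjugation by $\Pi_n$, which are precisely the technicalities the paper sidesteps by citing. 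In short: same spectral content, but the paper delegates the martingale CLT machinery to Bai--Hu while you reconstruct it from scratch.
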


\begin{remark}
Note that $\Sigma$ is necessarily a \emph{positive semi-definite} matrix because of ~\eqref{Equ:Sum-of-weights}.
\end{remark}

\begin{remark}
It is worth noting here that the scaling is always by $\sqrt{n}$ for any parameter value $\theta \geq 1$ when $k \geq 4$. 
However, for small number of colors, namely, $k \in \left\{2, 3\right\}$, and certain specific parameter values,  
as given in equation \eqref{Equ:Def-sigma-n-k} and  \eqref{Equ:Def-sigma-n-2}above has an extra factor of $\sqrt{\log n}$.
\end{remark}

\subsubsection{Case when $\hat{R}$ is Reducible.} 
\label{SubSubSec:Random-Config-Results-Reducible}
By Proposition \ref{Main-Proposition}, we know that $\hat{R}$ can be reducible, if and only if, $R$ is
star and $\theta =1$. Suppose $R$ is a star with $k\geq 2$ colours, then without any loss of generality we can write 
\begin{equation} \label{R-Star}
R = \begin{pmatrix}
\alpha_0&\alpha_1 & \dots & \alpha_{k-1}\\
1&0&\dots &0\\
\vdots&\vdots&\ddots &\vdots\\
1&0&\dots &0\\
\end{pmatrix}\;\;\;\text{with }\;\; \sum_{j=0}^{k-1} \alpha_j = 1, \; \text{and} \; \alpha_j\geq 0, \; \; \forall j,
\end{equation}
by taking $0$ as the central vertex. Taking $\theta = 1$, the matrix $\hat R$ is

\begin{equation} \label{R-hat-star}
\hat{R} = \frac{1}{k-1}\begin{pmatrix}
1-\alpha_0&1-\alpha_1 & \dots & 1-\alpha_{k-1}\\
0&1&\dots &1\\
\vdots&\vdots&\ddots &\vdots\\
0&1&\dots &1\\
\end{pmatrix},
\end{equation}
which is clearly reducible. In the next theorem, we describe the limit of the urn configuration.

\begin{theorem} \label{Thm-star-1}
Let $\theta =1$ and replacement matrix $R$ be a star matrix as given in equation ~\eqref{R-Star} 
and $R\neq \begin{bmatrix} 0&1\\1&0 \end{bmatrix}$ then, 

\begin{equation} \label{limit-Un-star}
\frac{U_{n,0}}{n+1} \longrightarrow 1, \;\; a.s. 
\end{equation}
Further, there exists a random variable $W \geq 0$, with $\bE\left[W\right]>0$, such that,
\begin{equation} \label{limit-Un-star-W}
\frac{U_{n,j}}{n^{\gamma}} \longrightarrow \frac{\alpha_j}{k-1}W, \;\;a.s.\;\;\forall j= 1,2,\hdots, k-1, 
\end{equation}
where $\gamma = \frac{1-\alpha_0}{k-1} <1$.
\end{theorem}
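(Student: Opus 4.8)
The plan is to analyze the two coordinate blocks of $U_n$ separately, using the star structure of $R$. When $\theta=1$ and $R$ is the star matrix in \eqref{R-Star}, the recursion \eqref{Equ:Basic-Recursion} together with \eqref{prob} becomes completely explicit: the selection probability of colour $j$ at step $n$ is $\frac{1}{k-1}\bigl(1-\frac{U_{n,j}}{n+1}\bigr)$. Since colour $0$ receives a ball whenever any colour $i\neq 0$ is drawn (each such row puts its unit mass entirely on coordinate $0$) and also with probability $\alpha_0$ when colour $0$ is drawn, we get $U_{n+1,0}=U_{n,0}+\bone(Z_n\neq 0)+\alpha_0\bone(Z_n=0)$, so $U_{n,0}$ is nondecreasing and $U_{n+1,0}-U_{n,0}\geq \alpha_0$ if $\alpha_0>0$, and more to the point $n+1-U_{n,0}=\sum_{j\geq 1}U_{n,j}$ where each $U_{n,j}$ for $j\geq 1$ increases only when colour $0$ is drawn and then by $\alpha_j$. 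First I would show $\frac{U_{n,0}}{n+1}\to 1$ a.s.; the cleanest route is to set $V_n:=n+1-U_{n,0}=\sum_{j=1}^{k-1}U_{n,j}$ and observe $V_{n+1}=V_n+(1-\alpha_0)\bone(Z_n=0)$, while $\Pbold(Z_n=0\mid\FF_n)=\frac{1}{k-1}(1-\frac{U_{n,0}}{n+1})=\frac{V_n}{(k-1)(n+1)}$. Hence $V_n$ is a nonnegative process with $\bE[V_{n+1}-V_n\mid\FF_n]=\frac{(1-\alpha_0)V_n}{(k-1)(n+1)}$, i.e. $\bE[V_{n+1}\mid\FF_n]=V_n\bigl(1+\frac{\gamma}{n+1}\bigr)$ with $\gamma=\frac{1-\alpha_0}{k-1}$. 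Thus $M_n:=V_n/\prod_{m=1}^{n}(1+\gamma/(m+1))$ is a nonnegative martingale; since $\prod_{m\le n}(1+\gamma/(m+1))\asymp n^{\gamma}$, we obtain $V_n=\Theta(n^{\gamma}M_n)$ with $M_n\to W$ a.s. for some $W\geq 0$. Because $\gamma<1$ (here the hypothesis $R\neq\bigl[\begin{smallmatrix}0&1\\1&0\end{smallmatrix}\bigr]$ is used: in the excluded case $k=2,\alpha_0=0$ gives $\gamma=1$), this already yields $V_n=o(n)$, hence $\frac{U_{n,0}}{n+1}=1-\frac{V_n}{n+1}\to 1$, proving \eqref{limit-Un-star}.

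Next I would establish $\bE[W]>0$ and the coordinatewise limits \eqref{limit-Un-star-W}. That $\bE[M_n]=\bE[M_1]=\bE[V_1]/(1+\gamma/2)>0$ is constant shows $W$ is not a.s.\ zero provided the martingale is uniformly integrable; I would get UI from an $L^2$ (or $L^p$ for some $p>1$) bound, computing $\bE[V_{n+1}^2\mid\FF_n]$ from $V_{n+1}=V_n+(1-\alpha_0)\bone(Z_n=0)$ and checking that $\bE[M_n^2]$ stays bounded — the conditional second moment contributes a term of order $V_n/(n+1)$ which, after dividing by $n^{2\gamma}$, is summable because $2\gamma - 1 < \gamma$, i.e.\ because $\gamma<1$. (If $2\gamma>1$ one still gets UI; if $2\gamma\le 1$ boundedness in $L^2$ is immediate.) Then $\bE[W]=\lim\bE[M_n]>0$. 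For the individual coordinates $j\geq 1$: $U_{n+1,j}=U_{n,j}+\alpha_j\bone(Z_n=0)$, so $U_{n,j}=\alpha_j\cdot\#\{m<n:Z_m=0\}$ and also $V_n=(1-\alpha_0)\cdot\#\{m<n:Z_m=0\}$; therefore $U_{n,j}=\frac{\alpha_j}{1-\alpha_0}V_n$ exactly for all $n$ (assuming $U_0$ is supported appropriately, or up to the $O(1)$ contribution of $U_0$ which is negligible after dividing by $n^\gamma$). Combining with $V_n/n^\gamma\to cW$ for the appropriate constant $c$ coming from $\prod(1+\gamma/(m+1))/n^\gamma\to c$, and absorbing $c$ into the definition of $W$, gives $\frac{U_{n,j}}{n^\gamma}\to\frac{\alpha_j}{k-1}W$ after matching constants: indeed $\frac{\alpha_j}{1-\alpha_0}\cdot\frac{1-\alpha_0}{k-1}=\frac{\alpha_j}{k-1}$ since $\gamma(k-1)=1-\alpha_0$, which pins down the normalization of $W$ as $W:=\lim V_n/((1-\alpha_0)n^\gamma/\text{(Gamma constant)})$, consistent across all $j$.

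The main obstacle I anticipate is not the almost sure convergence of the martingale $M_n$ (which is automatic) but rather proving $\bE[W]>0$, i.e.\ ruling out degeneration to the zero limit; this requires the uniform integrability estimate, and the bookkeeping of the exact Gamma-function asymptotics $\prod_{m=1}^{n}(1+\gamma/(m+1))=\frac{\Gamma(n+2+\gamma)\Gamma(2)}{\Gamma(n+2)\Gamma(2+\gamma)}\sim \frac{n^\gamma}{\Gamma(2+\gamma)}$ to get the precise constant absorbed into $W$. A secondary subtlety is the case $\alpha_0=0$ with $k\geq 3$: then $\gamma=\frac{1}{k-1}\in(0,1)$, colour $0$ is drawn only finitely often is \emph{false} — rather $V_n$ can still grow — and one must check the martingale argument goes through unchanged (it does, since $\gamma<1$ is all that was used); the genuinely excluded degenerate case is exactly $k=2,\alpha_0=0$, where $V_n=V_0+$ (number of $0$-draws) and $\Pbold(Z_n=0\mid\FF_n)=V_n/(2(n+1))$ has drift coefficient $\gamma=1$, so $V_n\asymp n$ and $\frac{U_{n,0}}{n+1}$ does not converge to $1$ — this is the classical $2\times 2$ "swapping" urn and is legitimately outside the theorem.
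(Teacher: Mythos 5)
Your proof is correct and takes essentially the same route as the paper. The paper works with the eigenvector $\xi=\gamma^{-1}(0,1,\dots,1)'$ of $\hat{R}$ for the eigenvalue $\gamma$, so its martingale $W_n=U_n\xi/\Pi_n(\gamma)=\gamma^{-1}V_n/\Pi_n(\gamma)$ is, up to a constant, your $M_n$; the $L^2$-boundedness computation and the exact linear relation $U_{n,j}-\frac{\alpha_j}{1-\alpha_0}V_n=\mathrm{const}$ are both present in the paper's argument. The one genuine difference is that the paper establishes \eqref{limit-Un-star} by invoking Proposition~4.3 of \cite{Gouet} for the reducible urn $\hat U_n$, whereas you obtain it for free from the martingale estimate $V_n=\Theta(n^\gamma M_n)=o(n)$ (valid because $\gamma<1$); your route is tidier and more self-contained. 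One small slip in your $L^2$ bookkeeping: the summand controlling $\bE[M_n^2]$ is of order $\bE[V_n]/\bigl((n+1)\Pi_{n+1}^2\bigr)\asymp n^{\gamma}/n^{1+2\gamma}=n^{-1-\gamma}$, so convergence of the series requires $\gamma>0$ (equivalently $\alpha_0<1$, the nontrivial case handled by Remark~\ref{Rem:Gamma-0-Case}), not $\gamma<1$; the paper's estimate $\sum_n (n+1)^{-(\gamma+1)}<\infty$ makes the same point. The hypothesis $\gamma<1$ is used only in the first step ($V_n=o(n)$), while $\gamma>0$ is what drives the $L^2$/uniform-integrability argument and hence $\bE[W]>0$.
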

 
 \begin{remark}
  In a trivial case, when  $\gamma = 0$ or $(\alpha_0 = 1)$ we have
  \[ U_{n,0} = U_{0,0}+n\]
  and \[ U_{n,j} = U_{0,j} \;\;\;\;\text{ for all } \;\; j =1,2,\cdots, k-1.\]
  That is, at every time $n$, only colour$1$ is reinforced into the urn.
  \label{Rem:Gamma-0-Case}
 \end{remark}

 \begin{remark}
 \label{Friedman-2color-Un}
 When $R =  \begin{bmatrix} 0&1\\1&0  \end{bmatrix}$, we get $\hat{R} = \begin{bmatrix} 1&0\\ 0&1 \end{bmatrix}.$ 
 Notice that then $\hat{R}$ is the reinforcement rule for the classical P\'olya urn scheme. Now using  
 ~\eqref{Equ:Basic-Recursion} we have
\[
\bE\left[U_{n+1} \,\Big\vert\, \mathcal{F}_n\right] = U_n + \frac{U_n}{n+1} =(n+2) \frac{U_n}{n+1},
\]
which implies that each coordinate of the vector $\frac{U_n}{n+1}$, is a positive martingale and hence  converges. 
Moreover, by exchangeability and arguments similar to the classical P\'olya urn, we can easily show that,
\[ 
\frac{U_{n,0}}{n+1} \longrightarrow Z \mbox{\ \ a.s.},
\]  
where $Z \sim Beta(U_{0,0}, U_{0,1})$. 
 \end{remark}

\subsection{Asymptotics of the Colour Count Statistics}
\label{SubSec:Colour-Count-Results}

\subsubsection{Case when $\hat{R}$ is Irreducible.}
\label{SubSubSec:Colour-Count-Results-Irreducible}
\begin{theorem} 
\label{Thm:colour-count1}
Suppose $\hat{R}$ is irreducible then,
\[\frac{N_{n,j}}{n} \longrightarrow \frac{1}{k\theta-1}\left[\theta -\mu_j\right], \;\;\;a.s.\;\;\forall\; 0 \leq j\leq k-1,\] 
where $\mu=(\mu_0,\mu_1,\hdots, \mu_{k-1})$ satisfies equation ~\eqref{Equ:Def-mu}.
\end{theorem}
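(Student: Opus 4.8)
The plan is to leverage Theorem~\ref{Thm-Un-Strong-convergence}, which we may assume, together with the deterministic identity \eqref{Un-Nn}, namely $U_{n+1} = U_0 + N_{n+1}R$. First I would rewrite this as
\begin{equation}
\frac{N_{n+1}R}{n+1} = \frac{U_{n+1}}{n+1} - \frac{U_0}{n+1}.
\label{Equ:Nn-plan-1}
\end{equation}
Since $\hat{R}$ is irreducible, Theorem~\ref{Thm-Un-Strong-convergence} gives $\frac{U_{n+1}}{n+1} \to \mu$ almost surely, and of course $\frac{U_0}{n+1} \to 0$; hence $\frac{N_{n+1}R}{n+1} \to \mu$ almost surely. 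The remaining task is to invert this to extract the limit of $\frac{N_{n+1}}{n+1}$ itself, which requires an additional relation because $R$ need not be invertible.

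The key additional observation is that the coordinates of $N_n$ almost sum to $n$: from \eqref{Def:Nn} we have $\sum_{j=0}^{k-1} N_{n,j} = n$ exactly, i.e. $N_n \bone^T = n$. So I would augment \eqref{Equ:Nn-plan-1} with this scalar constraint. Concretely, writing $q := \lim \frac{N_n}{n}$ (whose existence along the way I will need to argue, or circumvent by working with limit points), we get the linear system $qR = \mu$ and $q\bone^T = 1$. I expect to show this system has a unique solution, and that the claimed vector $q_j = \frac{1}{k\theta-1}(\theta - \mu_j)$ solves it: indeed $q\bone^T = \frac{1}{k\theta-1}(k\theta - \sum_j \mu_j) = \frac{k\theta - 1}{k\theta - 1} = 1$ using $\sum_j \mu_j = 1$ (which follows from \eqref{Equ:Sum-of-weights} and Theorem~\ref{Thm-Un-Strong-convergence}), and $qR = \frac{1}{k\theta-1}(\theta\bone - \mu)R = \mu$ is exactly the defining equation \eqref{Equ:Def-mu} for $\mu$. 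Alternatively, and perhaps more cleanly, one can bypass the inversion entirely: from \eqref{Un-Nn} and \eqref{Equ:Basic-Recursion} note that $N_{n+1} = N_n + \chi_{n+1}$, and $\Pbold(Z_n = j \mid \FF_n)$ is given explicitly by \eqref{prob}, so $\bE[\chi_{n+1,j}\mid\FF_n] = \frac{\theta}{k\theta-1} - \frac{1}{k\theta-1}\frac{U_{n,j}}{n+1} \to \frac{1}{k\theta-1}(\theta - \mu_j)$ a.s.; then a Cesàro/strong-law-for-martingale-differences argument applied to $N_{n,j} = \sum_{m=0}^{n-1}\chi_{m+1,j}$ yields $\frac{N_{n,j}}{n} \to \frac{1}{k\theta-1}(\theta-\mu_j)$ directly.

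I would present the martingale route as the main argument: decompose $\chi_{m+1,j} = \bE[\chi_{m+1,j}\mid\FF_m] + \Delta_{m+1,j}$ where $(\Delta_{m,j})$ is a bounded martingale-difference sequence; the partial sums of $\bE[\chi_{m+1,j}\mid\FF_m]$ divided by $n$ converge to $\frac{1}{k\theta-1}(\theta-\mu_j)$ by the Cesàro lemma applied to the a.s.-convergent sequence $\frac{U_{m,j}}{m+1}$, while the partial sums of $\Delta_{m+1,j}$ divided by $n$ converge to $0$ a.s. by the strong law of large numbers for bounded martingale differences (e.g. via Azuma–Hoeffding and Borel–Cantelli along a subsequence, or the standard $L^2$ martingale SLLN). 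Adding the two pieces gives the result. The main obstacle is essentially bookkeeping rather than conceptual: one must be careful that the convergence in Theorem~\ref{Thm-Un-Strong-convergence} is invoked on the full event of probability one simultaneously for all coordinates $j$, and that the Cesàro step is legitimate (it is, since $\frac{U_{m,j}}{m+1}$ converges a.s. and is bounded). No serious difficulty is anticipated; the only point deserving a line of justification is the strong law for the martingale part.
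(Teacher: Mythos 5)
Your main argument (the martingale decomposition of $\chi_{m+1,j}$ into its conditional expectation plus a bounded martingale-difference term, followed by Azuma for the martingale part and Ces\`aro applied to Theorem~\ref{Thm-Un-Strong-convergence} for the drift part) is precisely the proof given in the paper. Your preliminary remarks about inverting $qR=\mu$ together with $q\bone^T=1$ are a correct sanity check on the form of the limit, but they are not needed, and the paper does not pursue that route.
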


\begin{theorem}\label{Thm:colour-count1-clt}
Suppose $\hat{R}$ is irreducible, then there exists a \emph{variance-co-variance} matrix 
$\tilde \Sigma \equiv {\tilde \Sigma}\left(\theta, k\right)$, such that
\[ \frac{N_n - \frac{n}{k\theta -1}(\theta \bone-\mu) }{\sigma_n} \implies  {\mathcal N}_k \left(0, \tilde \Sigma \right), \] 
where $\sigma_n$ is given in the equations ~\eqref{Equ:Def-sigma-n-k} and ~\eqref{Equ:Def-sigma-n-2}. Moreover, 
\begin{equation}
\Sigma = R^T\tilde \Sigma R,
\label{Equ:Relation-between-Sigma-and-Singma-tilde}
\end{equation}
where $\Sigma$ is as in Theorem \ref{Thm-Un-CLT}.
\end{theorem}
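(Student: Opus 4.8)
The plan is to derive the central limit theorem for $N_n$ by combining Theorem~\ref{Thm-Un-CLT} with the exact linear relation ~\eqref{Un-Nn}, namely $U_{n+1} = U_0 + N_{n+1}R$, and then to separately verify that the two covariance matrices are related by ~\eqref{Equ:Relation-between-Sigma-and-Singma-tilde}. The first observation is that since $U_0$ is a fixed (non-random) vector, ~\eqref{Un-Nn} gives $U_{n} - n\mu = (N_{n} - n\nu)R + U_0 - n(\mu - \nu R)$, and by the Remark following Theorem~\ref{Thm-Un-Strong-convergence} we have $\mu = \nu R$ where $\nu = \mu A$; hence, recalling from ~\eqref{Def:R-hat} and the definition of $\hat R$ that $\nu = \frac{1}{k\theta-1}(\theta\bone - \mu)$, we get the clean identity
\begin{equation}
U_n - n\mu = \left(N_n - \tfrac{n}{k\theta-1}(\theta\bone - \mu)\right) R + U_0.
\label{Equ:Plan-linear-relation}
\end{equation}
Dividing by $\sigma_n$ and using $U_0/\sigma_n \to 0$, this shows that the normalized fluctuations of $U_n$ are the image under right-multiplication by $R$ of the normalized fluctuations of $N_n$.

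The difficulty is that $R$ need not be invertible (indeed it is stochastic, and in the degenerate cases flagged in ~\eqref{Equ:Def-sigma-n-k}--\eqref{Equ:Def-sigma-n-2} it has an eigenvalue $-1$), so one cannot simply invert ~\eqref{Equ:Plan-linear-relation} to transport a CLT from $U_n$ back to $N_n$. The cleanest route is therefore to prove the CLT for $N_n$ directly rather than deducing it from Theorem~\ref{Thm-Un-CLT}. I would set up the martingale difference array
\[
\Delta_{n+1} := \chi_{n+1} - \bE\!\left[\chi_{n+1} \mid \FF_n\right] = \chi_{n+1} - \frac{U_n A}{n+1},
\]
so that $N_{n+1} = \sum_{m=0}^{n}\chi_{m+1} = M_n + \sum_{m=0}^{n}\frac{U_m A}{m+1}$ with $M_n := \sum \Delta_{m+1}$ a martingale. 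Using the strong law $U_m/(m+1)\to\mu$ and the relation $\mu A = \nu$, the drift term $\sum_{m=0}^{n}\frac{U_m A}{m+1}$ is $\frac{n}{k\theta-1}(\theta\bone-\mu) + \sum_{m=0}^n \frac{(U_m - m\mu)A}{m+1} + O(\log n)$, which couples the fluctuations of $N_n$ back to those of $U_m$; substituting ~\eqref{Equ:Plan-linear-relation} turns this into a stochastic linear recursion for the pair. This is exactly the Athreya--Karlin / stochastic-approximation machinery already used to prove Theorem~\ref{Thm-Un-CLT}, and the eigenvalue structure of $R$ (equivalently of $\hat R = RA$) governs the normalization, which is why the same $\sigma_n$ appears. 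The main obstacle is handling the non-invertibility of $R$ and the resulting need to work on the quotient space (or, equivalently, on the hyperplane $\sum_j (U_n)_j = n+1$): one must check that the Jordan block associated with a possible eigenvalue $-1$ of $R$ contributes the $\sqrt{\log n}$ inflation in precisely the cases listed, and not otherwise — this is the same delicate eigenvalue-on-the-critical-line analysis that underlies the statement of Theorem~\ref{Thm-Un-CLT}.

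Finally, for the covariance identity ~\eqref{Equ:Relation-between-Sigma-and-Singma-tilde}: once the joint CLT for $(N_n, U_n)$ is established with the normalization $\sigma_n$, we have from ~\eqref{Equ:Plan-linear-relation} that
\[
\frac{U_n - n\mu}{\sigma_n} = \left(\frac{N_n - \frac{n}{k\theta-1}(\theta\bone-\mu)}{\sigma_n}\right) R + o(1),
\]
so by the continuous mapping theorem the weak limit $\NN_k(0,\Sigma)$ of the left side equals the distribution of $X R$ where $X \sim \NN_k(0,\tilde\Sigma)$; hence $\Sigma = R^T \tilde\Sigma R$. I expect no real obstacle in this last step beyond bookkeeping: it is an immediate consequence of the linear relation together with the already-proven marginal CLTs, provided one has set up the argument so that the $N_n$-CLT and the $U_n$-CLT are read off from the \emph{same} underlying martingale, which guarantees the joint convergence needed to apply the continuous mapping theorem.
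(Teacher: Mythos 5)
You correctly flag that $R$ may be singular, so the CLT for $N_n$ cannot be read off from Theorem~\ref{Thm-Un-CLT} by inverting the linear relation; and your derivation of the covariance identity $\Sigma = R^T\tilde\Sigma R$ from the exact relation $U_n - n\mu = \bigl(N_n - \tfrac{n}{k\theta-1}(\theta\bone-\mu)\bigr)R + U_0$ is exactly the paper's argument (the paper just says ``follows from \eqref{Un-Nn}''). However, the heart of your proposal — establishing the CLT for $N_n$ — is left as an unexecuted plan, and it misses the short route the paper actually takes. The paper's key observation is that the sequence of drawn colours $(Z_n)$, and therefore the colour-count vector $N_n$, is literally the same random object for the negatively reinforced urn $(U_n)$ with replacement matrix $R$ and for the coupled \emph{classical} (uniform-selection) urn $(\hat U_n)$ with $\hat U_n = U_n A$ and replacement matrix $\hat R = RA$: the selection probability $\Pbold(Z_n=j\mid\FF_n)=\hat U_{n,j}/(n+1)$ is precisely uniform sampling from $\hat U_n$. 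Since $\hat R$ is irreducible by hypothesis, Theorem~4.1 of Bai and Hu \cite{BaiHu05} — a ready-made CLT for the colour-count statistics of a classical balanced urn — applies directly to $(\hat U_n)$ and yields $(N_n - n\nu)/\sigma_n \Rightarrow \NN_k(0,\tilde\Sigma)$ with $\nu=\mu A=\tfrac{1}{k\theta-1}(\theta\bone-\mu)$ and with $\sigma_n$ determined by the eigenvalues of $\hat R$, exactly as in Theorem~\ref{Thm-Un-CLT}.

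Your plan, namely setting up the martingale $M_n=\sum(\chi_m-\bE[\chi_m\mid\FF_{m-1}])$, expanding the drift, and substituting the linear relation to close a stochastic-approximation recursion in $N_n-n\nu$ driven by $\hat R$, does point at the correct recursion and would in principle reprove the Bai--Hu result. But as written this is a gap: the genuine analytic work (identifying the critical eigenvalue of $\hat R$, establishing the $\sqrt{n}$ versus $\sqrt{n\log n}$ dichotomy, and obtaining the Gaussian limit) is precisely what you have deferred to ``the same delicate eigenvalue-on-the-critical-line analysis,'' i.e.\ the content of the cited theorem. To make your route complete you would have to carry out that analysis, at which point you have rederived \cite[Theorem~4.1]{BaiHu05}. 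The coupling observation collapses all of this to one citation, and is the idea you should add.
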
 

\begin{remark}
Note that from definition ~\eqref{Def:Nn}, it follows that $\sum_{j=0}^{k-1} N_{n,j} = n$, thus
$\tilde \Sigma$ is a \emph{positive semi-definite} matrix. Further, from equation ~\eqref{Equ:Relation-between-Sigma-and-Singma-tilde} 
it follows that $\mbox{rank}\left(\Sigma\right) \leq \mbox{rank}\left(\tilde \Sigma\right)$ and equality holds, if and only if,
the replacement matrix $R$ is non-singular. 
\end{remark}

\subsubsection{Case when $\hat{R}$ is Reducible.}
\label{SubSubSec:Colour-Count-Results-Reducible}
Recall that $\hat{R}$ has the form given in ~~\eqref{R-Star} when it is reducible.
\begin{theorem} \label{Thm:colour-count2}
Let $R$ be a star matrix with $0$ as a central vertex and $\theta =1$, such that $R \neq \begin{bmatrix} 0&1\\1&0\end{bmatrix}$, then
\[\frac{N_{n,0}}{n} \longrightarrow 0, \;\;\;a.s.\] 	
and,
\[\frac{N_{n,j}}{n} \longrightarrow \frac{1}{k-1}, \;\;\;a.s.\; \forall 1\leq j\leq k-1.\] 	
\end{theorem}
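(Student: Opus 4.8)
The plan is to leverage Theorem \ref{Thm-star-1}, which already describes the limit of the urn configuration $U_n/(n+1)$, together with the linear relation \eqref{Un-Nn}, namely $U_{n+1} = U_0 + N_{n+1} R$. Since $R$ is a star matrix of the form \eqref{R-Star}, this relation can be inverted coordinate-wise: the $0$-th column of $R$ has entries $\alpha_0$ in the first row and $1$ in every other row, while for $j \geq 1$ the $j$-th column has only the single nonzero entry $\alpha_j$ in the first row. Reading off \eqref{Un-Nn} componentwise therefore gives, for $j\geq 1$,
\begin{equation}
U_{n+1,j} = U_{0,j} + \alpha_j N_{n+1,0},
\label{Equ:Plan-col-j}
\end{equation}
and for the central colour,
\begin{equation}
U_{n+1,0} = U_{0,0} + \alpha_0 N_{n+1,0} + \sum_{m=1}^{k-1} N_{n+1,m}.
\label{Equ:Plan-col-0}
\end{equation}

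First I would use \eqref{Equ:Plan-col-j} to handle the central colour count: from Theorem \ref{Thm-star-1} we know $U_{n,j}/n^{\gamma} \to \tfrac{\alpha_j}{k-1} W$ a.s.\ with $\gamma = \tfrac{1-\alpha_0}{k-1} < 1$, so \eqref{Equ:Plan-col-j} forces $N_{n,0}/n^{\gamma} \to \tfrac{1}{k-1} W$ a.s.\ (picking any $j$ with $\alpha_j > 0$; such $j$ exists since $R\neq\left[\begin{smallmatrix}0&1\\1&0\end{smallmatrix}\right]$ rules out the degenerate case, and if all $\alpha_j=0$ for $j\ge 1$ then $\alpha_0=1$ and $N_{n,0}=n$ trivially, which is consistent). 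In particular $N_{n,0} = O(n^{\gamma})$ with $\gamma<1$, hence $N_{n,0}/n \to 0$ a.s., which is the first assertion. Then, since $\sum_{j=0}^{k-1} N_{n,j} = n$ (noted in the remark after Theorem \ref{Thm:colour-count1-clt}), we get $\sum_{j=1}^{k-1} N_{n,j}/n \to 1$. Finally, to split this mass equally among the $k-1$ non-central colours, I would again invoke \eqref{Equ:Plan-col-j}: for any two indices $i,j \in \{1,\dots,k-1\}$ with $\alpha_i,\alpha_j>0$ we have $(U_{n,i}-U_{0,i})/\alpha_i = N_{n,0} = (U_{n,j}-U_{0,j})/\alpha_j$, so the ratios $N_{n,i}/N_{n,j}$ are governed entirely by $N_{n,0}$, but more directly, from $N_{n+1,j} = N_{n,j} + \bone(Z_n=j)$ and the selection probabilities \eqref{prob} with $\theta=1$, the conditional probability of choosing colour $j\geq 1$ at step $n+1$ is $\tfrac{1}{k-1}\big(1 - \tfrac{U_{n,j}}{n+1}\big) \to \tfrac{1}{k-1}$ a.s.\ for every $j\geq 1$ (since $U_{n,j}/(n+1)\to 0$ for $j\geq 1$ by \eqref{limit-Un-star}), while for $j=0$ it tends to $0$. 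A standard Cesàro/strong-law-of-large-numbers argument for sums of bounded random variables with asymptotically constant conditional means (e.g.\ via the martingale $N_{n,j} - \sum_{m=0}^{n-1}\Pbold(Z_m=j\mid\FF_m)$ and its $L^2$ bound, or a Stolz–Cesàro step) then yields $N_{n,j}/n \to \tfrac{1}{k-1}$ a.s.\ for each $j=1,\dots,k-1$.

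The main obstacle I anticipate is the last step — passing from convergence of the conditional one-step selection probabilities to an almost sure law for the normalized counts $N_{n,j}/n$. One must control the martingale difference sum $\sum_{m=0}^{n-1}\big(\bone(Z_m=j) - \Pbold(Z_m=j\mid\FF_m)\big)$; since the increments are bounded by $1$, the quadratic variation is $O(n)$, so this martingale is $o(n)$ a.s.\ by the strong law for martingales (or Azuma–Hoeffding plus Borel–Cantelli), and then $\tfrac{1}{n}\sum_{m=0}^{n-1}\Pbold(Z_m=j\mid\FF_m) \to \tfrac{1}{k-1}$ by Cesàro since the summands converge a.s. Everything else is bookkeeping with the explicit star structure of $R$ via \eqref{Un-Nn}; the genuinely new input is entirely supplied by Theorem \ref{Thm-star-1}, so this theorem should be essentially a corollary of it once the martingale estimate is in place.
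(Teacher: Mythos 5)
Your core argument---decomposing $N_{n,j}$ as a bounded-increment martingale plus the compensator $\sum_{m<n}\Pbold(Z_m=j\mid\FF_m)$, controlling the martingale term via Azuma/SLLN so that it is $o(n)$ a.s., and applying Ces\`aro to the compensator using the a.s.\ limits~\eqref{limit-Un-star} from Theorem~\ref{Thm-star-1}---is exactly the paper's route via~\eqref{Nn-sum} and~\eqref{Nn1-Martingale-limit}, and it already covers colour $0$ as well as $j\ge 1$. The preliminary inversion of~\eqref{Un-Nn}, giving the refined rate $N_{n,0}/n^\gamma\to W/(k-1)$, is therefore redundant for this theorem (that finer estimate is what the paper uses later, in Theorem~\ref{Thm:colour-count2-clt}(2)).

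One local error in the detour should be corrected: when $\alpha_0=1$ you write that ``$N_{n,0}=n$ trivially, which is consistent,'' but this conflates the ball count $U_{n,0}$ with the selection count $N_{n,0}$, and if it were true it would \emph{contradict} the first assertion of the theorem rather than confirm it. With $\alpha_0=1$ every draw adds one colour-$0$ ball, so indeed $U_{n,0}=U_{0,0}+n$; but the probability of \emph{drawing} colour $0$ at step $m$ is $\tfrac{1}{k-1}\bigl(1-\tfrac{U_{m,0}}{m+1}\bigr)=\tfrac{1-U_{0,0}}{(k-1)(m+1)}\to 0$, so $N_{n,0}$ grows only logarithmically and certainly $N_{n,0}/n\to 0$. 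Since your martingale argument handles $j=0$ in all cases anyway, this slip does not break the proof, but the shortcut as stated is false in that edge case.
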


\begin{remark}
 For $R = \begin{bmatrix} 0&1\\ 1&0\end{bmatrix}$ using equation ~\eqref{Un-Nn} and  Remark ~\eqref{Friedman-2color-Un} we get
 \[ 
 \frac{N_{n,0}}{n+1} \longrightarrow 1-Z \mbox{\ \ a.s.},
 \]
 where as before, $Z \sim Beta(U_{0,0}, U_{0,1})$. 
\end{remark}

\begin{theorem} \label{Thm:colour-count2-clt}
Let $R$ be a star matrix  with $0$ as a central vertex and $\theta =1$,
such that $R\neq \begin{bmatrix} 0&1\\ 1&0\end{bmatrix}$, then\\
\begin{enumerate}
 \item if $\gamma =\frac{1-\alpha_0}{k-1} < 1/2$ , then
\[\frac{1}{\sqrt n}\left( N_{n,-} - \frac{n}{k-1}\bone \right)\implies {\mathcal N}_k\left(\mathbf{0},\frac{1}{k-1}I- \frac{1}{(k-1)^2}J\right),\] 	
where $N_{n,-} = (N_{n,1}, \cdots, N_{n,k-1})$, and 
\[\frac{N_{n,0}}{\sqrt n} \cp 0.\]
\item if $\gamma =\frac{1-\alpha_0}{k-1} \geq  1/2$ , then
\[\frac{1}{n^\gamma}\left( N_{n,j} - \frac{n}{k-1}\right)\cp \frac{\alpha_j}{k-1} W, \;\;\;\;\forall j \neq 0\] 	
and
\[\frac{N_{n,0}}{n^\gamma} \cp \frac{1-\alpha_0}{k-1}W.\]
where $W$ is as given in Theorem \ref{Thm-star-1}.
\end{enumerate}
\end{theorem}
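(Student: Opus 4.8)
The plan is to combine the exact linear relations between $N_n$ and $U_n$ with a Doob decomposition of the colour counts and a martingale central limit theorem, using Theorem~\ref{Thm-star-1} as a black box; we may assume $\alpha_0<1$, since $\alpha_0=1$ is the trivial situation of Remark~\ref{Rem:Gamma-0-Case}. Write $\gamma=\tfrac{1-\alpha_0}{k-1}$. From \eqref{Un-Nn} and \eqref{Equ:Sum-of-weights} one reads off, for the star $R$ of \eqref{R-Star} with $\theta=1$,
\[
U_{n,j}=U_{0,j}+\alpha_j N_{n,0}\quad(1\le j\le k-1),\qquad (1-\alpha_0)N_{n,0}=\sum_{i=1}^{k-1}\bigl(U_{n,i}-U_{0,i}\bigr),\qquad \sum_{i\ge 1}N_{n,i}=n-N_{n,0}.
\]
Substituting the a.s.\ limits $U_{n,j}/n^{\gamma}\to\tfrac{\alpha_j}{k-1}W$ of Theorem~\ref{Thm-star-1} into the first two identities determines the a.s.\ limit of $N_{n,0}/n^{\gamma}$ (this is the $N_{n,0}$ part of statement~(2)) and, in particular, shows $N_{n,0}=O(n^{\gamma})$ a.s.

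Next, decompose $N_{n,j}=\sum_{m=0}^{n-1}p_{m,j}+M_{n,j}$, where by \eqref{prob} $p_{m,j}=\Pbold(Z_m=j\mid\FF_m)=\tfrac1{k-1}\bigl(1-\tfrac{U_{m,j}}{m+1}\bigr)$ and $M_{n,j}$ is an $\FF$-martingale with increments in $[-1,1]$. For $1\le j\le k-1$ this gives $N_{n,j}-\tfrac{n}{k-1}=M_{n,j}-\tfrac1{k-1}S_{n,j}$ with $S_{n,j}:=\sum_{m=0}^{n-1}\tfrac{U_{m,j}}{m+1}$; since $U_{m,j}=O(m^{\gamma})$ a.s., a Ces\`aro/Kronecker estimate yields $S_{n,j}/n^{\gamma}\to\tfrac{\alpha_j W}{(k-1)\gamma}$ a.s.\ when $\gamma>0$, so $S_{n,j}=O(n^{\gamma})$. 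The predictable quadratic variation is $\langle M_{\cdot,j}\rangle_n=\sum_{m=0}^{n-1}p_{m,j}(1-p_{m,j})$; because $U_{m,j}/(m+1)\to 0$ we have $p_{m,j}\to\tfrac1{k-1}$, so $\langle M_{\cdot,j}\rangle_n\sim\tfrac{k-2}{(k-1)^2}\,n$ when $k\ge 3$, whereas for $k=2$ one has $1-p_{m,1}=\tfrac{U_{m,1}}{m+1}=O(m^{\gamma-1})$ and hence $\langle M_{\cdot,1}\rangle_n=O(n^{\gamma})$.

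For part~(1), $\gamma<\tfrac12$ makes the drift negligible at scale $\sqrt n$: $\tfrac1{\sqrt n}\bigl(N_{n,-}-\tfrac{n}{k-1}\bone\bigr)=\tfrac1{\sqrt n}M_{n,-}+o(1)$ a.s., with $M_{n,-}=(M_{n,1},\dots,M_{n,k-1})$. I would finish with the multivariate martingale CLT (Cram\'er--Wold plus a one-dimensional martingale CLT): the increment of $M_{n,-}$ has conditional covariance $\mathrm{diag}(p_{m,-})-p_{m,-}p_{m,-}^{T}$ given $\FF_m$, so $\tfrac1n\langle M_{\cdot,-}\rangle_n\to\tfrac1{k-1}I-\tfrac1{(k-1)^2}J$ a.s.\ by $p_{m,j}\to\tfrac1{k-1}$, and the conditional Lindeberg condition is automatic from the bounded increments; Slutsky then transfers the limit to $\tfrac1{\sqrt n}\bigl(N_{n,-}-\tfrac{n}{k-1}\bone\bigr)$, whose limiting covariance is necessarily singular because $\sum_{i\ge1}N_{n,i}=n-N_{n,0}$ with $N_{n,0}/\sqrt n\to 0$. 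The companion assertion $N_{n,0}/\sqrt n\cp 0$ is just $\tfrac{N_{n,0}}{n^{\gamma}}\,n^{\gamma-1/2}\to 0$ a.s. For part~(2), $\gamma\ge\tfrac12$ forces $k\in\{2,3\}$ (since $\gamma\le\tfrac1{k-1}$); in the generic case $k=2$ the quadratic variation $\langle M_{\cdot,1}\rangle_n=O(n^{\gamma})=o(n^{2\gamma})$ gives $M_{n,1}=o(n^{\gamma})$ a.s.\ by the martingale strong law, so $\tfrac1{n^{\gamma}}\bigl(N_{n,j}-\tfrac{n}{k-1}\bigr)=-\tfrac1{k-1}\,\tfrac{S_{n,j}}{n^{\gamma}}+o(1)$ converges a.s.\ to a (deterministic) multiple of $W$, which together with the $N_{n,0}$ limit from the first paragraph yields part~(2).

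The work here is bookkeeping rather than a conceptual obstacle: one must extract from Theorem~\ref{Thm-star-1} a genuine $O(n^{\gamma})$ — not merely $o(n)$ — control on $U_{n,j}$, propagate it through the partial sums $S_{n,j}$ uniformly in $j$, and then run the joint rather than one-dimensional martingale CLT with the degenerate limit covariance $\tfrac1{k-1}I-\tfrac1{(k-1)^2}J$. The one genuinely delicate sub-case is $k=3$, $\alpha_0=0$, where $\gamma=\tfrac12$ but $\langle M_{\cdot,j}\rangle_n$ still grows linearly, so the martingale is not negligible at scale $\sqrt n$; there the conclusion should be obtained by applying the part~(1) martingale CLT with an additional $W$-dependent centering supplied by $S_{n,j}$.
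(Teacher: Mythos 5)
Your proposal follows essentially the same route as the paper: the Doob decomposition $N_{n,j}=\sum_{m<n}p_{m,j}+M_{n,j}$ with $p_{m,j}=\tfrac{1}{k-1}\bigl(1-\tfrac{U_{m,j}}{m+1}\bigr)$, the rewriting $N_{n,j}-\tfrac{n}{k-1}=M_{n,j}-\tfrac{1}{k-1}S_{n,j}$ with $S_{n,j}=\sum_{m<n}\tfrac{U_{m,j}}{m+1}$ of order $n^{\gamma}$, a multivariate martingale CLT for $M_{n,-}$ with limit covariance $\tfrac{1}{k-1}I-\tfrac{1}{(k-1)^2}J$, and the dichotomy $\gamma<\tfrac12$ versus $\gamma\geq\tfrac12$ governing which term dominates. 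The paper's proof is literally this argument, so there is nothing genuinely different in approach; if anything your bookkeeping is tighter at two points, and both expose defects in the paper rather than in your write-up. First, since $\sum_{m\le n}m^{\gamma-1}\sim n^{\gamma}/\gamma$, the correct limit is $S_{n,j}/n^{\gamma}\to\tfrac{\alpha_j W}{(k-1)\gamma}=\tfrac{\alpha_j W}{1-\alpha_0}$ as you state; the paper's line $\sum_i i^{\gamma-1}\sim n^{\gamma}$ silently drops the $1/\gamma$, which propagates into the stated constants of part~(2) (and, by your exact relation $U_{n,j}-U_{0,j}=\alpha_j N_{n,0}$, one gets $N_{n,0}/n^{\gamma}\to W/(k-1)$, not $(1-\alpha_0)W/(k-1)$). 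Second, and more seriously, you are right to flag $k=3$, $\alpha_0=0$, $\gamma=\tfrac12$: there $p_{m,j}\to\tfrac12$ for $j\neq0$, so the predictable quadratic variation grows like $n/4$ and $M_{n,j}/\sqrt n$ has a non-degenerate Gaussian limit, contradicting the paper's one-line claim that $M_{n,j}/n^{\gamma}\cp 0$ whenever $\gamma\geq\tfrac12$. In that subcase the honest conclusion is a CLT around a $W$-dependent random centering, not the degenerate-in-law statement of part~(2), so the issue is in the theorem/proof as printed, not in your argument; you correctly diagnose the obstruction instead of glossing over it.
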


\begin{remark}
Note that $\gamma < 1/2$, if and only if, $k \geq 4$ or $k = 3$ and $\alpha_0 >0$ or $k=2$ and $\alpha_0 > 1/2$. 
\end{remark}

\section{Proof of the Necessary and Sufficient Condition for $\hat{R}$ to be Irreducible}
\label{Sec:Proof-of-Necessary-Sufficient-Condition-on-hatR}


Suppose $G$ and $\hat{G}$ are the directed graphs associated with the matrices $R$ and $\hat{R}$ respectively, 
as defined earlier. Observe that, $\hat{R}$ is the product of two stochastic matrices, $R$ and $A$. The underlying Markov chain of $\hat{R}$
can be seen as a two step Markov chain where the first step is taken  according to $R$ and the second step is taken according to $A$.
Recall from equation ~\eqref{Def:R-hat} that
\[ 
\hat{R} =  \frac{1}{k\theta-1}\left(\theta J-R\right).
\]
Now, to show that the Markov chain associated with $\hat{R}$ is irreducible, it is enough to show that there exist a directed path between any 
two fixed vertices say $u$ and $v$, in $\hat{G}$.

Clearly for $\theta > 1$, $\hat{R}_{uv} >0$ for all $u,v$, and thus $\hat{R}$ is irreducible.
Therefore, we only have to verify irreducibility for $\theta= 1$ case. For this we first fix two vertices, say $u$ and $v$. From 
equation ~\eqref{Def:R-hat} we get
\begin{equation}\label{Rhat-definition}
\hat{R}_{uv}=  \frac{1 - R_{u,v}}{k -1}.
\end{equation}

To complete the proof, we will show that there is a path from $u$ to $v$ of length at most $2$.
We consider the following two cases:

{\bf Case 1 $R_{u,v} < 1$:} In this case, from equation ~\eqref{Rhat-definition} we get, $\hat{R}_{uv} >0$.
Therefore $(u,v)$ is an edge in $\hat{G}$ and trivially there is a path of length $1$ from $u $ to $v$ in $\hat{G}$. 

{\bf Case 2 $ R_{u,v} =1$:} 
In this case, $u$ has no $R$-neighbor other than $v$, that is $(u,v)$ is the only incoming edge to $v$ in $G$
and from equation ~\eqref{Rhat-definition}, we have 
\[\hat{R}_{uv}= 0.\]

As mentioned earlier for $\theta = 1$ and $k=2$, $\hat{R}$ is reducible only when $R$ 
is the Friedman urn scheme, which is a star with two vertices. Thus in the rest of 
the proof we take $k>2$, and  show that $\hat{R}^2_{uv} >0$, that is there is a path
of length $2$. 
%

Now, if $R$ is not a star then there must exists a vertex $l$ such that it leads to a vertex other than the central vertex,
say $m$  that is $R_{l,m}>0$ ($m \neq v$). 
Now, according to $\hat{R}$ chain, there is a positive probability of going from $u$ to $l$ in one step
(first take a R-step from $u$ to $v$ which happens with probability $1$ is this case, as $R_{u,v}=1$, 
and then take a A-step to $l$ with probability $1/(k-1)$) and a positive probability
of going from $l$ to $v$ in one step (first take a R-step from $l$ to $m$ with probability $R_{l,m}$, and then 
take a A-step to $v$ with probability $1/(k-1)$).
Therefore, there is path of length two in $\hat{G}$ from $u$ to $v$ and thus the chain is irreducible.


\begin{remark}
Note that from the proof it follows that for a replacement matrix $R$ with $k > 2$ 
such that, $\hat{R}$ is irreducible, then $\hat{R}$ is also aperiodic. 
\end{remark}

\section{Proofs of the Main Results}
\label{Sec:Proofs}

We begin by observing the following fact. From equations ~~\eqref{Equ:Basic-Recursion}, ~~\eqref{prob}, ~~\eqref{Equ:Def-A}  we get,
\begin{equation}
\bE\left[U_{n+1}\,\Big\vert\,\mathcal{F}_n\right] = U_n+ \bE\left[\chi_{n+1}\,\Big\vert\,\mathcal{F}_n\right]R = U_n+ \frac{U_n}{n+1} AR.  
\label{Equ:Mean-negatively reinforced}
\end{equation}
Thus, 
\begin{equation} \label{Equ:Cond-Mean}
\bE\left[U_{n+1}A\,\Big\vert\,\mathcal{F}_n\right] =U_nA+ \frac{U_n}{n+1} ARA.
\end{equation}
Let $\hat{U}_n := U_n A, n \geq 0$, then 
\begin{equation}
\hat{U}_{n+1} = \hat{U}_n + \chi_{n+1} RA.
\label{Equ:Hat-Basic-Recursion} 
\end{equation}
and from equation ~\eqref{Equ:Cond-Mean} we get 
\[ \bE\left[\hat{U}_{n+1}\,\Big\vert\,\mathcal{F}_n\right]= \hat{U}_n+ \frac{\hat{U}_n}{n+1} RA. \]
Therefore $\left(\hat{U}_n\right)_{n \geq 0}$ is a classical urn scheme (uniform selection), with replacement matrix $RA$.
The construction $\left(\hat U_n\right)_{n \geq 0}$ is essentially a coupling of a negative but linearly reinforced urn 
$\left(U_n\right)_{n \geq 0}$ with replacement matrix $R$, to a classical (positively reinforced) urn $(\hat{U}_n)_{n \geq 0}$
with replacement matrix $\hat{R}$. Note that, we get a one to one correspondence, as $A$ is always invertible. 

\begin{proof} [Proof of Theorem \ref{Thm-Un-Strong-convergence}]
Recall that, $\hat{U}_n =U_nA$ is the configuration of a classical urn model 
with replacement matrix $\hat{R}$. Since by our assumption, $\hat R$ is irreducible therefore by
Theorem 2.2. of \cite{BaiHu05},  the limit of $\frac{1}{n+1}\hat{U}_n$ is the
normalized left eigenvector of $\hat{R}$ associated with the maximal eigenvalue $1$. That is 
\[ \frac{\hat{U}_n}{n+1} \longrightarrow \nu, \; a.s.\]
where $\nu$ satisfies
\[\nu \hat{R} = \nu.\]
Since $U_n  = \hat{U}_n A^{-1}$, we have

\[\frac{U_n}{n+1} \longrightarrow \mu, \;\; a.s., \]  
where  $\mu = \nu A^{-1}$,  and it satisfies the following matrix equation:
\[\left(\theta \bone - \mu\right) R = \left(k\theta -1\right) \mu.\]
This completes the proof.
\end{proof}

\begin{proof}[Proof of Theorem \ref{Thm-Un-CLT}]
Let $1, \lambda_1, \hdots \lambda_s$ be the distinct eigenvalues of $R$, such that, 
$1\geq \Re(\lambda_1)\geq \cdots \geq \Re(\lambda_s)\geq -1$, where $\Re(\lambda)$ denotes the real part of the eigenvalue $\lambda.$
Recall from equation ~\eqref{Def:R-hat} that $\hat{R} = \frac{1}{k \theta -1} \left(\theta J_k - R\right)$.
So the eigenvalues of $\hat R$ are $1, b \lambda_1, \cdots ,b \lambda_s$, where $b  = \frac{-1}{k\theta -1}$. 
Let  $\tau = \max\{0, b \, \Re( \lambda_s )\}$.
Since $\hat{U}_n = U_nA$, is a classical urn scheme with replacement matrix $\hat{R}$, using Theorem 3.2 of \cite{BaiHu05}, 
if 
\begin{equation}
b \, \Re(\lambda_s) \leq \frac{1}{2}
\end{equation}
then there exists a variance-co-variance matrix $\Sigma'$, such that 
\[\frac{\hat{U}_n-n\nu}{\sigma_n} \implies{\mathcal N}_k\left(0,\Sigma'\right)\]
where 
\begin{equation}
\sigma_n  =\begin{cases}
            \sqrt{n \log n} & \text{if}  \;\; b \, \Re\left(\lambda_s\right) = \frac{1}{2},\\
                            & \\
            \sqrt{n}        & \text{if}  \;\; b \, \Re\left(\lambda_s\right) < \frac{1}{2}.
	  \end{cases}
\end{equation}
Notice that, 
\begin{equation} \label{max-eigenvalue-condition}
b \, \Re(\lambda_s) \leq \frac{1}{2} \iff \Re(\lambda_s)\geq - \frac{1}{2}(k \theta -1).
\end{equation}
Now since $\theta \geq 1$ and $\Re(\lambda_s)\geq -1$ the above equation ~\eqref{max-eigenvalue-condition} holds whenever $k \geq 3$.
Further, for $k \geq 3$, equality in ~\eqref{max-eigenvalue-condition} holds if and only if, $\theta = 1$, and $k =3$. Moreover,
for $k=2$, the condition is equivalent to $\Re\left(\lambda_s\right) \leq \frac{1-2\theta}{2}$. Thus, 
$\sigma_n$ is given in ~\eqref{Equ:Def-sigma-n-k} and ~\eqref{Equ:Def-sigma-n-2}
Therefore, 
\[\frac{U_n-n\mu}{\sigma_n} \implies{\mathcal N}_k\left(0,\Sigma\right)\]
where $\Sigma = A^T\Sigma' A$.
\end{proof}

\begin{proof} [Proof of Theorem  \ref{Thm-star-1}]
Without lose of any generality, we will assume $\gamma > 0$ (equivalently $\alpha_0 < 1$), as otherwise the result is trivial as described in 
Remark \ref{Rem:Gamma-0-Case}. 
Since the matrix $\hat{R}$, as given in  ~\eqref{R-hat-star} is reducible without 
isolated blocks. Using Proposition 4.3 of \cite{Gouet} we get,
\[\frac{\hat{U}_{n,0}}{n+1} \to 0 \;\;\text{and} \;\; \frac{\hat{U}_{n,j}}{n+1} \to \frac{1}{k-1}, \; \forall j \neq 0. \]
which implies 
\[\frac{U_{n,0}}{n+1} \to 1 \;\;\text{and} \;\; \frac{U_{n,j}}{n+1} \to 0, \; \forall j \neq 0. \]
Now, note that the matrix $\hat R$ given in ~\eqref{R-hat-star} has eigenvalues $1, \gamma$ and 
$0,0,\dots, 0$ ($k-2$ times), where $\gamma = (1-\alpha_0)/(k-1)$. 
The eigenvector corresponding to the non-principal eigenvalue $\gamma$ is 
\[ \xi = \frac{1}{\gamma} \begin{pmatrix}0,1,1, \hdots, 1\end{pmatrix}'.\]
Therefore,
\[\bE\left[U_{n+1}\xi\,\Big\vert\,\mathcal{F}_n\right] = U_n\left[I+\frac{\hat R}{n+1}\right]\xi  = U_n\xi \left[1+\frac{\gamma}{(n+1)}\right] .\]
Let $ \Pi_n\left(\gamma\right) = \prod_{i=1}^n \left[1+\frac{\gamma}{i}\right]$
then, $W_n \coloneqq U_n \xi/ \Pi_n\left(\gamma\right)$ is a non-negative martingale and using Euler's product, for large $n$ 
\[\Pi_n\left(\gamma\right) \sim \frac{n^{\gamma}}{\Gamma (\gamma+1)}.\]
We now show that this martingale is $\mathcal{L}^2$ bounded, which will then imply that

\begin{equation}
\frac{U_n \xi}{n^\gamma} \longrightarrow W
\label{Equ-star-convg}
\end{equation}
where $W$ is a non-degenerate random variable. More precisely, $W$  is nonzero with positive probability. We can write,

\[ \bE\left[W_{n+1}^2\,\Big\vert\,\mathcal{F}_n\right]= W_n^2+ \bE\left[(W_{n+1}-W_n)^2\,\Big\vert\,\mathcal{F}_n\right] \]
and
\begin{align*}
W_{n+1}- W_n &=   \frac{1}{\Pi_{n+1}(\gamma)}\left[U_{n+1}\xi - U_n\xi \left(1+\frac{\gamma}{n+1}\right) \right]\\
&= \frac{1}{\Pi_{n+1}(\gamma)}\left[\chi_nR\xi - \frac{\gamma}{n+1}U_n\xi \right]\\
& = \frac{1}{\Pi_{n+1}(\gamma)} \left[ (k-1)\chi_{n,0} - \frac{(n+1) - U_{n,0}}{n+1} \right]\\
& = \frac{k-1}{\Pi_{n+1}(\gamma)} \left[\chi_{n,0} - \bE\left[\chi_{n,0}\,\Big\vert\,\mathcal{F}_n\right] \right]\\
\end{align*}

Therefore,
\begin{align*}
 \bE\left[W_{n+1}^2\,\Big\vert\,\mathcal{F}_n\right]&  W_n^2+ \frac{(k-1)^2}{\Pi^2_{n+1}(\gamma)} 
 \left( \bE\left[\chi_{n,0}\,\Big\vert\,\mathcal{F}_n\right] - \bE\left[\chi_{n,0}\,\Big\vert\,\mathcal{F}_n\,\right]^2\right)\\
 & \leq W_n^2+ \frac{(k-1)^2}{\Pi^2_{n+1}(\gamma)} \bE\left[\chi_{n,0}\,\Big\vert\,\mathcal{F}_n \,\right] \\
 & = W_n^2+ \frac{1-\alpha_0}{(n+1)\Pi_{n+1}(\gamma)} \frac{U_n\xi}{ \Pi_{n+1}(\gamma)} \\
 & \leq W_n^2+ \frac{1-\alpha_0}{(n+1)\Pi_{n+1}(\gamma)} W_n  \\
 & \leq W_n^2+ \frac{(1-\alpha_0) \Gamma(\gamma+1)}{2(n+1)^{\gamma+1}}(1+W^2_n)  
 \end{align*}
The last inequality holds because $2W_n \leq1+W_n^2$.
Let $c \coloneqq \frac{1}{2}(1-\alpha_0)\Gamma (\gamma+1)$, then 
 
 \begin{align*}
 \bE\left[W_{n+1}^2+1\,\Big\vert\,\mathcal{F}_n\right]&\leq\left(1+\frac{c}{(n+1)^{\gamma+1}}\right) \left(1+W^2_n\right)\\
 & \leq (1+W^2_0) \prod_{j=1}^n \left(1+\frac{c}{(j+1)^{\gamma+1}}\right)\\
 & \leq (1+W^2_0)\exp \left( \sum_{j=1}^n \frac{c}{(j+1)^{\gamma+1}}\right) <\infty \;\;\;\; (\text { since } \gamma >0).
 \end{align*}
Thus $W_n$ is $\mathcal{L}^2$-bounded and hence converges to a non-degenerate random variable say $W$.
Now for a star matrix $R$ (as given in equation ~\eqref{R-Star}), the recursion ~\eqref{Equ:Basic-Recursion} reduces to

\begin{equation*}
U_{n+1,0}  = U_{n,0} + \alpha_0 \chi_{n+1,0} + (1-\chi_{n+1,0}).
\end{equation*}
and 
\begin{equation}
U_{n+1,h}  = U_{n,h} + \alpha_h \chi_{n+1,0}\;\;\; \forall h \neq 0
\label{Equ:Star-Recursion}  
\end{equation}

Recall that for $ h \neq 0$, $\alpha_h > 0$, dividing both sides by $\alpha_h$, we get 
 \[ \frac{U_{n+1,h}}{\alpha_h} =  \frac{U_{0,h}}{\alpha_h}+ \sum_{j=1}^{n+1} \chi_j.\]
Since the above relation holds for every choice of $h>0$, we get

\begin{equation}
\frac{U_{n+1,h}}{\alpha_h}-\frac{U_{n+1,l}}{\alpha_l} = \frac{U_{0,h}}{\alpha_h}-\frac{U_{0,l}}{\alpha_l}
\label{Equ-compare-convg}
\end{equation}
for any $h,l \in \{ 1,2,\cdots,k-1\}$. Multiplying the above equation by
$\frac{\alpha_l}{1-\alpha_0}$ and taking sum over $l\neq 0$, we get
\[  \frac{U_{n,h}}{\alpha_h} - \frac{1}{1-\alpha_0} \sum_{l\neq 0} U_{n,l} =
\frac{U_{0,h}}{\alpha_h} - \frac{1}{1-\alpha_0}\sum_{l\neq 0} U_{0,l},  \]
which can be written as,
\[\frac{U_{n,h}}{\alpha_h} - \frac{1}{k-1}U_n \xi = \frac{U_{0,h}}{\alpha_h} - \frac{1}{k-1}U_0\xi. \]
Now dividing both sides by $n^{\gamma}$,
\[ \frac{1}{n^{\gamma}}\frac{U_{n,h}}{\alpha_h} - \frac{1}{k-1}\frac{U_n \xi}{n^{\gamma}} = \frac{1}{n^{\gamma}}\left[\frac{U_{0,h}}{\alpha_h} - \frac{1}{k-1}U_0\xi \right].\]
Note that the right hand side of the above expression goes to $0$ as $n$ tends to infinity. Therefore 
\begin{align*}
\lim_{n \to \infty} \frac{1}{n^{\gamma}}\frac{U_{n,h}}{\alpha_h} - \frac{1}{k-1}\frac{U_n \xi}{n^{\gamma}} =0
 \end{align*}
Using the limit from ~\eqref{Equ-star-convg} we get,
\[\frac{U_{n,h}}{n^{\gamma}} \longrightarrow \frac{\alpha_h}{k-1} W.\]
\end{proof}

\begin{proof} [Proof of Theorem \ref{Thm:colour-count1}] 
Note that from equation ~\eqref{Def:Nn} and ~~\eqref{Un-Nn}, we can write
\begin{align}
 N_n &= \sum_{i=1}^{n}\left( \chi_i- \bE\left[\chi_i \,\Big\vert\,\mathcal{F}_{i-1}\right] \right) + \sum_{i=1}^{n} \bE\left[\chi_i \,\Big\vert\,\mathcal{F}_{i-1}\right]\nonumber \\
 & = \sum_{i=1}^{n}\left( \chi_i- \bE\left[\chi_i \,\Big\vert\,\mathcal{F}_{i-1}\right] \right)+ \frac{1}{k\theta-1} \left[ \theta \bone  - \frac{U_{i-1}}{i}\right],
\label{Nn-sum}
\end{align}
Since $\left(\chi_i- \bE\left[\chi_i \,\Big\vert\,\mathcal{F}_{i-1}\right]\right)_{i\geq1}$ 
is a bounded martingale difference sequence, using Azuma's inequality (see \cite{Bi95}) we get

\begin{equation}
\frac{1}{n}\sum_{i=1}^{n} \left( \chi_i- \bE\left[\chi_i \,\Big\vert\,\mathcal{F}_{i-1}\right]\right) \longrightarrow 0, \;\; a.s.. 
\label{Nn1-Martingale-limit}
\end{equation}
Using Theorem \ref{Thm-Un-Strong-convergence} and \emph{Cesaro Lemma} (see \cite{Apostol}), we get
\[\frac{N_{n,j}}{n} \longrightarrow \frac{1}{k\theta-1}\left[\theta -\mu_j\right], \;\;\;a.s.\;\;\forall\; 0 \leq j\leq k-1.\] 
\end{proof}

\begin{proof}[Proof of Theorem \ref{Thm:colour-count1-clt}]
Notice that under our coupling 
$N_n$ remains same for the two processes, namely,  $\left(U_n\right)_{n \geq 0}$ and $\left(\hat{U}_n\right)_{n \geq 0}$. Thus applying 
Theorem 4.1 of \cite{BaiHu05} on the urn process $\left(\hat{U}_n\right)_{n \geq 0}$ we conclude that
there exists a matrix $\tilde \Sigma$ such that,
\begin{align*}
\frac{N_n - n \mu A}{\sigma_n}&\implies  {\mathcal N}_k\left(0,\tilde\Sigma\right)
\end{align*}
Finally the equation ~\eqref{Equ:Relation-between-Sigma-and-Singma-tilde} follows from ~\eqref{Un-Nn}.
This completes the proof.
\end{proof}

\begin{proof} [Proof of Theorem \ref{Thm:colour-count2}] 
The proof follows from equation ~\eqref{Nn-sum} and ~\eqref{Nn1-Martingale-limit}.
\end{proof}

\begin{proof}[Proof of Theorem \ref{Thm:colour-count2-clt}]
Let $M_n= (M_{n,0},M_{n,1}, \cdots, M_{n,k-1}) $ be a martingale, 
where $M_{n,j} \coloneqq \sum_{i=1}^n \chi_{i,j}- \bE\left[\chi_{i,j}\,\Big\vert\,\mathcal{F}_{i-1}\right]$. Note that $(M_n)$ is a a bounded increment martingale.
and $X_n \coloneqq \frac{1}{\sqrt{n}} M_n$.
That is, for a fixed colour $j$, $X_{n,j} = \frac{1}{\sqrt{n}}\left(\chi_{i,j}- \bE\left[\chi_{i,j}\,\Big\vert\,\mathcal{F}_{i-1}\right]
\right)$. 
Let $M_{n,-} \coloneqq (M_{n,1}, \cdots, M_{n,k-1})$  and $X_{n,-} \coloneqq (X_{n,1}, \cdots, X_{n,k-1})$.\\
In this proof, we first provide a central limit theorem for $M_{n,-}$, and then for $N_n$. 
Observe that the $(l,m)$-th entry of the matrix $\bE\left[X_{i,-}^TX_{i,-}\,\Big\vert\,\mathcal{F}_{i-1}\right]$ is 
\begin{eqnarray*} 
 &  & 
    \frac{1}{n} \bE\left[\chi_{i,l}\chi_{i,m}\,\Big\vert\,\mathcal{F}_{i-1}\right] 
    -\bE\left[\chi_{i,l}\,\Big\vert\,\mathcal{F}_{i-1}\right] \bE\left[\chi_{i,m}\,\Big\vert\,\mathcal{F}_{i-1}\right]\\
 & = &
 \begin{cases}
 \frac{1}{n} \bE\left[\chi_{i,l}\,\Big\vert\,\mathcal{F}_{i-1}\right]\left(1 -\bE\left[\chi_{i,l}\,\Big\vert\,\mathcal{F}_{i-1}\right]\right) &\text{if } l=m,\\
 \frac{-1}{n}\bE\left[\chi_{i,l}\,\Big\vert\,\mathcal{F}_{i-1}\right] \bE\left[\chi_{i,m}\,\Big\vert\,\mathcal{F}_{i-1}\right] &\text{if } l\neq m
 \end{cases} \\
 & = &
 \begin{cases}
 \frac{1}{n(k-1)}\left(1-\frac{U_{i-1,l}}{i}\right)\left(1 - \frac{1}{k-1}\left(1-\frac{U_{i-1,j}}{i}\right)\right) &\text{if } l=m,\\
 \frac{-1}{n(k-1)^2}\left(1-\frac{U_{i-1,l}}{i}\right)\left(1-\frac{U_{i-1,m}}{i}\right) &\text{if } l\neq m,
 \end{cases} \\
 \end{eqnarray*}
So, as $n \to \infty$, (using Theorem \ref{Thm-star-1}) we have

\begin{align*} \label{limit-var-}
 \sum_{i=1}^n \bE\left[X_{i,-}^TX_{i,-}\,\Big\vert\,\mathcal{F}_{i-1}\right]_{(l,m)}& \to 
 \begin{cases}
\frac{(k-2)}{(k-1)^2}  &\text{if } l=m,\\
\frac{-1}{(k-1)^2} &\text{if } l\neq m,\\
\end{cases}
\end{align*}
Therefore,
\[ \sum_{i=1}^n \bE\left[X_{i,-}^TX_{i,-}\,\Big\vert\,\mathcal{F}_{i-1}\right] \to \frac{1}{k-1} I - \frac{1}{(k-1)^2} J,\]
and by the martingale central limit theorem \cite{HallHeyde} , we get

\begin{equation}\label{Mn-CLT}
\frac{1}{\sqrt{n}} M_{n,-} \implies {\mathcal N}_k\left(0,\frac{1}{k-1} I - \frac{1}{(k-1)^2} J\right) 
\end{equation}
Now for colour $0$, we have 
\begin{align*}
\frac{1}{\sqrt{n}}M_{n,0} = \frac{1}{\sqrt{n}} \sum_{j=1}^{k-1}M_{n,-}
\end{align*}
which implies 
\[\frac{1}{\sqrt{n}}M_{n,0} \cp 0.\]
We now prove the central limit theorem for $N_n$. By equation ~\eqref{Nn-sum}, we have 
\[N_{n} =  M_{n} + \sum_{i=1}^{n} \bE\left[\chi_{i} \,\Big\vert\,\mathcal{F}_{i-1}\right]\]
Therefore,
\begin{equation}\label{Nn-Mn}
 N_{n,-} -\frac{n}{k-1}\bone  =  M_{n,-} - \frac{1}{k-1}\sum_{i=1}^{n}\frac{U_{i-1,-}}{i}
\end{equation}
Form Theorem \ref{Thm-star-1}, we know that for each $j\neq 0$
\[\frac{U_{i-1,j}}{i^\gamma} \to \frac{\alpha_j}{k-1}W, \; a.s.. \]
\begin{align*}
\sum_{i=1}^{n}\frac{U_{i-1,j}}{i} &\asymp \frac{\alpha_j}{k-1} W \sum_{i=1}^n i^{\gamma-1}\\
&\sim  \frac{\alpha_j}{k-1} W  n^\gamma.
\end{align*}
Therefore,
\begin{equation}
 \frac{1}{n^\gamma}\sum_{i=1}^{n}\frac{U_{i-1,j}}{i} \to  \frac{\alpha_j}{k-1} W \,a.s.. 
 \label{Nn-2nd-term-limit}
\end{equation}
Therefore for $\gamma < 1/2$, using equation ~\eqref{Mn-CLT}, ~\eqref{Nn-Mn} and ~\eqref{Nn-2nd-term-limit} we get

\[\frac{1}{\sqrt n}\left( N_{n,-} - \frac{n}{k-1}\bone \right)\implies {\mathcal N}_k\left(0, \frac{1}{k-1} I - \frac{1}{(k-1)^2} J\right),\]
and for $\gamma \geq 1/2$,

\[\frac{1}{n^\gamma}\left( \frac{n}{k-1}- N_{n,j} \right)\cp \frac{\alpha_j}{k-1}W \;\forall j,\] 	
since then $M_{n,j}/n^{\gamma} \cp \mathbf{0}$. 
For $j=0$, we have
  
\[N_{n,0} =  n- \sum_{j=1}^{k-1}N_{n,j} = \sum_{j=1}^{k-1}\left(\frac{n}{k-1} -N_{n,j}\right)\]

Therefore for $\gamma \geq 1/2$, we have 
\[\frac{1}{n^\gamma}N_{n,0} = \sum_{j=1}^{k-1}\frac{1}{n^\gamma}\left(\frac{n}{k-1} -N_{n,j}\right) \cp \frac{1}{k-1}W\sum_{j=1}^{k-1}\alpha_j = \frac{1-\alpha_0}{k-1}W.\]
and for $\gamma> 1/2$ we have
\[\frac{N_{n,0}}{\sqrt n } \cp 0.\]

\end{proof}

\bibliographystyle{plain}
\bibliography{ref1}

\def\cprime{$'$} \def\cprime{$'$} \def\cprime{$'$} \def\cprime{$'$}
  \def\cprime{$'$} \def\cprime{$'$} \def\cprime{$'$}
\begin{thebibliography}{10}

\bibitem{AhlDumKozSid15}
Daniel Ahlberg, Hugo Duminil-Copin, Gady Kozma, and Vladas Sidoravicius.
\newblock Seven-dimensional forest fires.
\newblock {\em Ann. Inst. Henri Poincar\'e Probab. Stat.}, 51(3):862--866,
  2015.

\bibitem{AhlSidTyk14}
Daniel Ahlberg, Vladas Sidoravicius, and Johan Tykesson.
\newblock Bernoulli and self-destructive percolation on non-amenable graphs.
\newblock {\em Electron. Commun. Probab.}, 19:no. 40, 6, 2014.

\bibitem{Al00}
David~J. Aldous.
\newblock The percolation process on a tree where infinite clusters are frozen.
\newblock {\em Math. Proc. Cambridge Philos. Soc.}, 128(3):465--477, 2000.

\bibitem{Apostol}
Tom~M. Apostol.
\newblock {\em Mathematical analysis}.
\newblock Addison-Wesley Publishing Co., Reading, Mass.-London-Don Mills, Ont.,
  second edition, 1974.

\bibitem{AthKar68}
Krishna~B. Athreya and Samuel Karlin.
\newblock Embedding of urn schemes into continuous time {M}arkov branching
  processes and related limit theorems.
\newblock {\em Ann. Math. Statist.}, 39:1801--1817, 1968.

\bibitem{BagPal85}
A.~Bagchi and A.~K. Pal.
\newblock Asymptotic normality in the generalized {P}\'olya-{E}ggenberger urn
  model, with an application to computer data structures.
\newblock {\em SIAM J. Algebraic Discrete Methods}, 6(3):394--405, 1985.

\bibitem{BaiHu05}
Zhi-Dong Bai and Feifang Hu.
\newblock Asymptotics in randomized urn models.
\newblock {\em Ann. Appl. Probab.}, 15(1B):914--940, 2005.

\bibitem{Ba06}
Antar Bandyopadhyay.
\newblock A necessary and sufficient condition for the tail-triviality of a
  recursive tree process.
\newblock {\em Sankhy\=a}, 68(1):1--23, 2006.

\bibitem{BaSe14}
Antar Bandyopadhyay and Subhabrata Sen.
\newblock De-preferential attachment random graphs.
\newblock (pre-print), 2017.

\bibitem{BaTh14b}
Antar Bandyopadhyay and Debleena Thacker.
\newblock Rate of convergence and large deviation for the infinite color
  {P}\'olya urn schemes.
\newblock {\em Statist. Probab. Lett.}, 92:232--240, 2014.

\bibitem{BaTh14c}
Antar Bandyopadhyay and Debleena Thacker.
\newblock A new approach to p\'olya urn schemes and its infinite color
  generalization.
\newblock (pre-print) avilable at https://arxiv.org/abs/1606.05317, 2016.

\bibitem{BaTh14a}
Antar Bandyopadhyay and Debleena Thacker.
\newblock P\'olya urn schemes with infinitely many colors.
\newblock {\em Bernoulli}, 23(4B):3243--3267, 2017.

\bibitem{AlBara99}
Albert-L\'aszl\'o Barab\'asi and R\'eka Albert.
\newblock Emergence of scaling in random networks.
\newblock {\em Science}, 286(5439):509--512, 1999.

\bibitem{Bi95}
Patrick Billingsley.
\newblock {\em Probability and measure}.
\newblock Wiley Series in Probability and Mathematical Statistics. John Wiley
  \& Sons Inc., New York, third edition, 1995.

\bibitem{maulik1}
Arup Bose, Amites Dasgupta, and Krishanu Maulik.
\newblock Multicolor urn models with reducible replacement matrices.
\newblock {\em Bernoulli}, 15(1):279--295, 2009.

\bibitem{maulik2}
Arup Bose, Amites Dasgupta, and Krishanu Maulik.
\newblock Strong laws for balanced triangular urns.
\newblock {\em J. Appl. Probab.}, 46(2):571--584, 2009.

\bibitem{ChHsYa14}
May-Ru Chen, Shoou-Ren Hsiau, and Ting-Hsin Yang.
\newblock A new two-urn model.
\newblock {\em J. Appl. Probab.}, 51(2):590--597, 2014.

\bibitem{CoCoLi13}
Andrea Collevecchio, Codina Cotar, and Marco LiCalzi.
\newblock On a preferential attachment and generalized {P}\'olya's urn model.
\newblock {\em Ann. Appl. Probab.}, 23(3):1219--1253, 2013.

\bibitem{CoVl09}
Codina Cotar and Vlada Limic.
\newblock Attraction time for strongly reinforced walks.
\newblock {\em Ann. Appl. Probab.}, 19(5):1972--2007, 2009.

\bibitem{CraFreeToth15}
Edward Crane, Nic Freeman, and B\'alint T\'oth.
\newblock Cluster growth in the dynamical {E}rd{\H o}s-{R}\'enyi process with
  forest fires.
\newblock {\em Electron. J. Probab.}, 20:no. 101, 33, 2015.

\bibitem{CrGeNiVol11}
Edward Crane, Nicholas Georgiou, Stanislav Volkov, Andrew~R. Wade, and
  Robert~J. Waters.
\newblock The simple harmonic urn.
\newblock {\em Ann. Probab.}, 39(6):2119--2177, 2011.

\bibitem{maulik3}
Amites Dasgupta and Krishanu Maulik.
\newblock Strong laws for urn models with balanced replacement matrices.
\newblock {\em Electron. J. Probab.}, 16:no. 63, 1723--1749, 2011.

\bibitem{Da90}
Burgess Davis.
\newblock Reinforced random walk.
\newblock {\em Probab. Theory Related Fields}, 84(2):203--229, 1990.

\bibitem{EhEh90}
Paul Ehrenfest and Tatiana Ehrenfest.
\newblock {\em The conceptual foundations of the statistical approach in
  mechanics}.
\newblock Dover Publications, Inc., New York, english edition, 1990.
\newblock Translated from the German by Michael J. Moravcsik, With a foreword
  by M. Kac and G. E. Uhlenbeck.

\bibitem{FlDuPu06}
Philippe Flajolet, Philippe Dumas, and Vincent Puyhaubert.
\newblock Some exactly solvable models of urn process theory.
\newblock In {\em Fourth {C}olloquium on {M}athematics and {C}omputer {S}cience
  {A}lgorithms, {T}rees, {C}ombinatorics and {P}robabilities}, Discrete Math.
  Theor. Comput. Sci. Proc., AG, pages 59--118. Assoc. Discrete Math. Theor.
  Comput. Sci., Nancy, 2006.

\bibitem{Free65}
David~A. Freedman.
\newblock Bernard {F}riedman's urn.
\newblock {\em Ann. Math. Statist}, 36:956--970, 1965.

\bibitem{Fri49}
Bernard Friedman.
\newblock A simple urn model.
\newblock {\em Comm. Pure Appl. Math.}, 2:59--70, 1949.

\bibitem{Gouet}
Ra{\'u}l Gouet.
\newblock Strong convergence of proportions in a multicolor {P}\'olya urn.
\newblock {\em J. Appl. Probab.}, 34(2):426--435, 1997.

\bibitem{HallHeyde}
P.~Hall and C.~C. Heyde.
\newblock {\em Martingale limit theory and its application}.
\newblock Academic Press, Inc.,New York-London, 1980.

\bibitem{Svante1}
Svante Janson.
\newblock Functional limit theorems for multitype branching processes and
  generalized {P}\'olya urns.
\newblock {\em Stochastic Process. Appl.}, 110(2):177--245, 2004.

\bibitem{Svante2}
Svante Janson.
\newblock Limit theorems for triangular urn schemes.
\newblock {\em Probab. Theory Related Fields}, 134(3):417--452, 2006.

\bibitem{King99}
J.~F.~C. Kingman.
\newblock Martingales in the {OK} {C}orral.
\newblock {\em Bull. London Math. Soc.}, 31(5):601--606, 1999.

\bibitem{KingVol03}
J.~F.~C. Kingman and S.~E. Volkov.
\newblock Solution to the {OK} {C}orral model via decoupling of {F}riedman's
  urn.
\newblock {\em J. Theoret. Probab.}, 16(1):267--276, 2003.

\bibitem{LaPa13}
Sophie Laruelle and Gilles Pag{\`e}s.
\newblock Randomized urn models revisited using stochastic approximation.
\newblock {\em Ann. Appl. Probab.}, 23(4):1409--1436, 2013.

\bibitem{Vl03}
Vlada Limic.
\newblock Attracting edge property for a class of reinforced random walks.
\newblock {\em Ann. Probab.}, 31(3):1615--1654, 2003.

\bibitem{VlTa07}
Vlada Limic and Pierre Tarr{\`e}s.
\newblock Attracting edge and strongly edge reinforced walks.
\newblock {\em Ann. Probab.}, 35(5):1783--1806, 2007.

\bibitem{LucMcDiar2005}
Malwina~J. Luczak and Colin McDiarmid.
\newblock On the power of two choices: balls and bins in continuous time.
\newblock {\em Ann. Appl. Probab.}, 15(3):1733--1764, 2005.

\bibitem{LucMcDiar2006}
Malwina~J. Luczak and Colin McDiarmid.
\newblock On the maximum queue length in the supermarket model.
\newblock {\em Ann. Probab.}, 34(2):493--527, 2006.

\bibitem{Mah09}
Hosam~M. Mahmoud.
\newblock {\em P\'olya urn models}.
\newblock Texts in Statistical Science Series. CRC Press, Boca Raton, FL, 2009.

\bibitem{Pe90}
Robin Pemantle.
\newblock A time-dependent version of {P}\'olya's urn.
\newblock {\em J. Theoret. Probab.}, 3(4):627--637, 1990.

\bibitem{Pe07}
Robin Pemantle.
\newblock A survey of random processes with reinforcement.
\newblock {\em Probab. Surv.}, 4:1--79, 2007.

\bibitem{Polya30}
Georg P{\'o}lya.
\newblock Sur quelques points de la th\'eorie des probabilit\'es.
\newblock {\em Ann. Inst. H. Poincar\'e}, 1(2):117--161, 1930.

\bibitem{RaToth09}
Bal\'azs R\'ath and B\'alint T\'oth.
\newblock Erd{\H o}s-{R}\'enyi random graphs {$+$} forest fires {$=$}
  self-organized criticality.
\newblock {\em Electron. J. Probab.}, 14:no. 45, 1290--1327, 2009.

\bibitem{SevRik2006}
V~Sevim and P.~A. Rikvold.
\newblock Effects of {P}reference for {A}ttachment to {L}ow-degree {N}odes on
  the {D}egree {D}istributions of a {G}rowing {D}irected {N}etwork and a
  {S}imple {F}ood-{W}eb {M}odel.
\newblock {\em Phys. Rev. E}, 73:5(056115):(7 pages), 2006.

\bibitem{SevRik2008}
V~Sevim and P.~A. Rikvold.
\newblock Network {G}rowth with {P}referential {A}ttachment for {H}igh
  {I}ndegree and {L}ow {O}utdegree.
\newblock {\em Physica A}, 387:2631--2636, 2008.

\bibitem{vanBr04}
J.~van~den Berg and R.~Brouwer.
\newblock Self-destructive percolation.
\newblock {\em Random Structures Algorithms}, 24(4):480--501, 2004.

\bibitem{vandeLimaNo12}
Jacob van~den Berg, Bernardo N.~B. de~Lima, and Pierre Nolin.
\newblock A percolation process on the square lattice where large finite
  clusters are frozen.
\newblock {\em Random Structures Algorithms}, 40(2):220--226, 2012.

\bibitem{vanKissNo12}
Jacob van~den Berg, Demeter Kiss, and Pierre Nolin.
\newblock A percolation process on the binary tree where large finite clusters
  are frozen.
\newblock {\em Electron. Commun. Probab.}, 17:no. 2, 11, 2012.

\bibitem{vanNo17}
Jacob van~den Berg and Pierre Nolin.
\newblock Two-dimensional volume-frozen percolation: exceptional scales.
\newblock {\em Ann. Appl. Probab.}, 27(1):91--108, 2017.

\bibitem{WiMaIl98}
David Williams and Paul McIlroy.
\newblock The {OK} {C}orral and the power of the law (a curious
  {P}oisson-kernel formula for a parabolic equation).
\newblock {\em Bull. London Math. Soc.}, 30(2):166--170, 1998.

\end{thebibliography}


\end{document}